\documentclass{article}
\usepackage{amsmath,amsthm}
\usepackage{ amssymb }
\usepackage{times}
\usepackage{fullpage}
\usepackage{dsfont}
\usepackage{graphicx}
\usepackage{epstopdf}
\usepackage{hyperref}
\usepackage{enumerate}
\usepackage{color}
\usepackage{caption}
\usepackage{subcaption}
\usepackage{siunitx}
\usepackage{mathrsfs}
\usepackage[alphabetic,lite,nobysame]{amsrefs}

\theoremstyle{plain} 
\newtheorem{thm}{Theorem}[section]
\newtheorem*{thm*}{Theorem}

\newtheorem{lem}[thm]{Lemma}
\newtheorem{prop}[thm]{Proposition}
\theoremstyle{definition}

\theoremstyle{remark}

\newcommand{\R}{\mathbb{R}}


\newcommand{\mcA}{\mathcal{A}}
\newcommand{\mcC}{\mathcal{C}}

\newcommand{\mcL}{\mathcal{L}}

\newcommand{\mcP}{\mathcal{P}}



\newcommand{\msP}{\mathscr{P}}

\newcommand{\diam}{\mathrm{diam}}
\newcommand{\lra}{\longrightarrow}


\newcommand{\fsubd}{\mathrel{{\scriptstyle\searrow}\kern-1ex^d\kern0.5ex}}
\newcommand{\bsubd}{\mathrel{{\scriptstyle\swarrow}\kern-1.6ex^d\kern0.8ex}}


\begin{document}

\title{Hyperbolicity constants for pants and relative pants graphs}
\author{Ashley Weber}
\date{ }

\maketitle

\begin{abstract}
The pants graph has proved to be influential in understanding 3-manifolds concretely.  
This stems from a quasi-isometry between the pants graph and the Teichm\"uller space with the Weil-Petersson metric.  
Currently, all estimates on the quasi-isometry constants are dependent on the surface in an undiscovered way.  
This paper starts effectivising some constants which begins the understanding how relevant constants change based on the surface.  We do this by studying the hyperbolicity constant of the pants graph for the five-punctured sphere and the twice punctured torus.  The hyperbolicity constant of the relative pants graph for complexity 3 surfaces is also calculated.  Note, for higher complexity surfaces, the pants graph is not hyperbolic or even strongly relatively hyperbolic.
\end{abstract}

\section{Introduction}
The pants graph has been instrumental in understanding Teichm\"uller space.  This is because the pants graph is quasi-isometric to Teichm\"uller space equipped with the Weil-Petersson metric \cite{Brock-WPtoPants}. Brock and Margalit used pants graphs to show that all isometries of Teichm\"uller space with the Weil-Petersson metric arise from the mapping class group of the surface \cite{BM-WPisom}.  This relationship was also used to classify for which surfaces the associated Teichm\"uller space is hyperbolic.  
The relationship between the pants graph and Teichm\"uller space has been used to study volumes of 3-manifolds \cite{Brock-WPtoPants, Brock-WPtrans}.  In particular, it has been used to relate volumes of the convex core of a hyperbolic 3-manifold to the distance of two points in Teichm\"uller space.  It has also related the volume of a hyperbolic 3-manifold arising from a psuedo-Anosov element in the mapping class group to the translation length of the psuedo-Anosov element as applied to the pants graph.  Both of these relations have constants which depend on the surface; this paper is the start of effectivising those constants.  Notice Aougab, Taylor, and Webb have some effective bounds on the quasi-isometry bounds, however even these still depend on the surface in a way that is unknown \cite{ATW}.

Let $S_{g,p}$ be a surface with genus $g$ and $p$ punctures.  We define the complexity of a surface to be $\xi(S_{g,p}) =3g + p - 3 $.   
Brock and Farb have shown that the pants graph is hyperbolic if and only if the complexity of the surface is less than or equal to $2$ \cite{BF}.  Brock and Masur showed that in a few cases the pants graph is strongly relatively hyperbolic, specifically when $\xi(S) = 3$ \cite{BM}.  Even though hyperbolicity is well studied for the pants graph, the hyperbolicity constants associated with the pants graph or the relative pants graph is not.  In addition to having a further understanding of the quasi-isometry mentioned above and all of its applications, actual hyperbolicity constants are useful in answering questions about asymptotic time complexity of certain algorithms, especially those involving the mapping class group.  
More speculatively, estimates on hyperbolicity constants may be crucial to effectively understand the virtual fibering conjecture, which relates the geometry of the fiber to the geometry of the base surface.  The focus of this paper is to find hyperbolicity constants for the pants graph and relative pants graph, when these graphs are hyperbolic.  

\begin{thm*}[\emph{c.f. Theorem ~\ref{main thm 1}}]
For a surface $S = S_{0,5}, S_{1,2}$, $\mcP(S)$ is $2,691,437$-thin hyperbolic.
\end{thm*}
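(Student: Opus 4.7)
The plan is to exploit the special structure of complexity-two surfaces, where every pants decomposition consists of exactly two disjoint curves and every essential subsurface other than $S$ itself has complexity at most one, so its curve graph is a Farey graph. First I would set up a natural Lipschitz projection $\pi \colon \mcP(S) \to \mcC(S)$ sending a pants decomposition $\{\alpha, \beta\}$ to one of its components, and analyze the fibers: $\pi\inv(\alpha)$ is isomorphic to the pants graph of the complement $S \setminus \alpha$, a Farey graph whose thinness constant is small and explicit. In this picture $\mcP(S)$ looks like a family of Farey graphs fibering over the Farey-like complex $\mcC(S)$, and the substantive content becomes controlling how these fibers are glued along elementary moves.

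Next I would specify a family of preferred paths between any pair $P_0, P_1 \in \mcP(S)$: lift a tight geodesic in $\mcC(S)$ from $\pi(P_0)$ to $\pi(P_1)$, and between successive vertices of the lift use Farey geodesics in the corresponding fibers to realize the required elementary moves. Using an effective hyperbolicity constant for $\mcC(S)$ (such as the Hensel--Przytycki--Webb bound of $17$) together with the explicit Farey-graph hyperbolicity constant, one obtains quantitative control on each preferred path. I would then check that these paths are uniform quasi-geodesics in $\mcP(S)$ by invoking an effective Masur--Minsky distance formula; because $\xi(S) = 2$, the sum over subsurfaces involves only $S$ and its complexity-one subsurfaces, which keeps the constants tractable.

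With preferred paths in hand, the next step is to verify Bowditch's effective hyperbolicity criterion (or the Masur--Schleimer guessing-geodesics criterion) for this family. This reduces to an explicit slim-triangle estimate: two preferred paths sharing an endpoint fellow-travel via the $17$-thinness in $\mcC(S)$ on their projections and via the Farey fiber bound on the interpolating segments, with an effective bounded geodesic image theorem converting projection closeness into closeness in $\mcP(S)$. Feeding the various explicit constants into the formula produced by the criterion yields the claimed value $2{,}691{,}437$.

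The main obstacle is arithmetic rather than conceptual: each appeal to bounded geodesic image, to quasi-convexity of fibers, and to the Bowditch upgrade introduces a multiplicative factor, and I would need to propagate every one of them without allowing any constant to be absorbed silently. The large numerical answer reflects this conservative bookkeeping; any sharpening of the inputs, most notably the effective bounded geodesic image constant, would immediately reduce the final bound.
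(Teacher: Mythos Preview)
Your overall architecture coincides with the paper's: your ``preferred paths'' are exactly the hierarchies of Masur--Minsky, and the paper likewise feeds the family $\mcL(x,y)$ of all such hierarchies into Bowditch's criterion (Proposition~\ref{subset hyperbolic}), using the Hensel--Przytycki--Webb constant $17$ for $\mcC(S)$ and Webb's bound $M\le 100$ for the Bounded Geodesic Image Theorem as the two numerical inputs. So the skeleton is right.

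Two points, however, deserve correction. First, the step where you ``check that these paths are uniform quasi-geodesics in $\mcP(S)$ by invoking an effective Masur--Minsky distance formula'' is unnecessary: Bowditch's criterion asks only for the thin-triangle condition and a diameter bound on $\mcL(x,y)$ when $d(x,y)\le 1$, not for any quasi-geodesic property. An effective distance formula with explicit constants is in fact harder to arrange than what is actually needed, and routing through it would inflate the final number rather than produce $2{,}691{,}437$.

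Second, your description of the slim-triangle estimate---``fellow-travel via the $17$-thinness in $\mcC(S)$ on their projections and via the Farey fiber bound on the interpolating segments, with an effective bounded geodesic image theorem converting projection closeness into closeness in $\mcP(S)$''---misidentifies how the Bounded Geodesic Image Theorem enters. Closeness of the main geodesics in $\mcC(S)$ does \emph{not} by itself bound pants-graph distance, because the Farey links can be arbitrarily long. What the paper actually does (Theorem~\ref{hierarchy k-centered}) is a three-case analysis on whether the main geodesics share a vertex; in the generic Case~3 one chooses the $17$-center $c$ of the main-geodesic triangle, builds an auxiliary hierarchy from $\{c,c_0\}$ to a nearest point on each side, and then bounds each link of that auxiliary hierarchy by $5M$ via a contradiction argument: if a link at $w$ were too long, the Bounded Geodesic Image Theorem would force $w$ to appear on two of the three main geodesics, contradicting either minimality of $c$ or disjointness of the sides. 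This yields the $8{,}900$-centered bound, and only then does one multiply by $4$ (Lemma~\ref{centered to thin}) and solve Bowditch's inequality for $m$ to obtain $2{,}691{,}437$. Your sketch does not contain this mechanism, and without it the thin-triangle condition is not established.
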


Computing the asymptotic translation lengths of an element in the mapping class group on $\mcP(S)$ is a question explored by Irmer \cite{Irmer}.  Bell and Webb have an algorithm that answers this question for the curve graph \cite{BellWebb}.  Combining the works of Irmer, and Bell and Webb, one could conceivably come up with an algorithm for asymptotic translation lengths on $\mcP(S)$.  In this case, the above Theorem would put a bound on the run-time of the algorithm in the cases that $S = S_{0,5}, S_{1,2}$. 

We now turn our attention to the relatively hyperbolic cases.

\begin{thm*}[\emph{c.f. Theorem ~\ref{main thm 2}}]
For a surface $S = S_{3,0}, S_{1,3}, S_{0,6}$, $\mcP_{rel}(S)$ is $2,606,810,489$-thin hyperbolic.
\end{thm*}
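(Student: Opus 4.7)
The plan is to adapt the strategy of Theorem~\ref{main thm 1} to the relatively hyperbolic setting, leveraging the Brock--Masur result that $\mcP_{rel}(S)$ is strongly relatively hyperbolic when $\xi(S) = 3$. The approach has three stages: first, realize $\mcP_{rel}(S)$ explicitly as the graph obtained from $\mcP(S)$ by coning off the subsets of pants decompositions that share a multicurve bounding a positive-complexity subsurface; second, establish an explicit quasi-isometry $\pi : \mcP_{rel}(S) \to \mathcal{C}(S)$ between this electrified graph and the curve graph, with effective multiplicative and additive constants; third, use an effective Morse-lemma-based transfer to convert the effective hyperbolicity constant for $\mathcal{C}(S)$ into one for $\mcP_{rel}(S)$.

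For stage two I would work through each of $S_{3,0}, S_{1,3}, S_{0,6}$ individually, exploiting the combinatorics of their pants decompositions (for $S_{0,6}$ these correspond to trivalent trees on six leaves, with analogous explicit models for the other two surfaces). The projection $\pi$ sends a pants decomposition to one of its component curves; coarse well-definedness follows because adjacent vertices of $\mcP_{rel}(S)$ either share all but one curve (an elementary move) or lie in a common cone (bounded image in $\mathcal{C}(S)$). To bound the quasi-isometry constants, I would lift a curve graph geodesic between two pants decompositions to a path in $\mcP_{rel}(S)$ of comparable length by building, at each stage, a pants decomposition containing the current geodesic curve while remaining inside a bounded-diameter cone. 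For stage three I would invoke an effective Morse lemma: if $X$ is $\delta$-hyperbolic and $Y$ is $(K, C)$-quasi-isometric to $X$, then $Y$ is $\delta'$-hyperbolic with an explicit $\delta' = f(\delta, K, C)$; substituting an effective hyperbolicity constant for $\mathcal{C}(S)$ from the work of Bowditch or of Aougab--Pho-On--Sisto, together with the $(K, C)$ computed above, then yields the stated bound.

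The main obstacle is controlling the multiplicative quasi-isometry constant $K$: the Morse-lemma amplification in stage three is polynomial in $K$, and any slack here quickly overshoots $2{,}606{,}810{,}489$. Pinning down $K$ requires a careful case analysis of how many elementary moves in $\mcP_{rel}(S)$ are needed to realize one step of a curve graph geodesic, tracking the hierarchy machinery restricted to complexity $1$ and $2$ subsurfaces. A secondary concern is matching the chosen effective hyperbolicity constant of $\mathcal{C}(S)$ to the computed $(K, C)$ so that the final amplified constant does not exceed the claimed bound; once these two inputs are tuned, the remainder of the argument is a bookkeeping computation rather than a new geometric obstacle.
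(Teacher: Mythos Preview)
Your plan hinges on stage two, but the proposed quasi-isometry $\pi\colon \mcP_{rel}(S) \to \mcC(S)$ does not exist. In $\mcP_{rel}(S)$ only the sets $X_d$ for \emph{domain-separating} curves $d$ are coned off. For a non-domain-separating curve $c$ (e.g.\ in $S_{0,6}$ any curve cutting off exactly two punctures) the set $X_c$ is left intact; it is a copy of $\mcP(S\setminus c)$, a complexity-$2$ pants graph of infinite diameter. This infinite diameter persists in $\mcP_{rel}(S)$: project any $\mcP_{rel}$-path to $\mcC(S\setminus c)$ via $\pi_{S\setminus c}$; each elementary move changes the projection by a bounded amount, and so does each transit through a cone point $p_d$, since both endpoints of the transit contain $d$ and hence their $\pi_{S\setminus c}$-images share $\pi_{S\setminus c}(d)$. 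Yet $\pi(X_c)$ sits in the $1$-ball about $c$ in $\mcC(S)$. So $\pi$ collapses an unbounded set to a bounded one and is not a quasi-isometric embedding; no finite $K$ exists, and stage three never begins. (If instead you cone off $X_c$ for \emph{every} curve $c$, you may well land on something quasi-isometric to $\mcC(S)$, but that space is not $\mcP_{rel}(S)$.)

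The paper avoids any global comparison with $\mcC(S)$. It builds explicit \emph{relative $3$-archies} in $\mcP_{rel}(S)$: hierarchy-style paths with a main geodesic in $\mcC(S)$, secondary geodesics in each $\mcC(S\setminus w)$ for non-domain-separating $w$ on the main geodesic, and tertiary geodesics one level down, with domain-separating curves routed through their cone points. Theorem~\ref{relative hierarchy k-centered} shows, by a three-case analysis parallel to Theorem~\ref{hierarchy k-centered} and repeated use of the Bounded Geodesic Image Theorem, that triangles of such paths are $k$-centered for an explicit $k$. Lemma~\ref{centered to thin} and Bowditch's criterion (Proposition~\ref{subset hyperbolic}) then convert this into the stated thinness constant. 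The curve graph enters only locally, through the $17$-centering constant and the bound $M\le 100$, never via a global quasi-isometry.
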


To show both of our main theorems, we construct a family of paths that is very closely related to hierarchies, introduced in \cite{MMII}.  We show that this family of paths satisfies the thin triangle condition which, by a theorem of Bowditch, allows us to conclude the whole space is hyperbolic \cite{Bow}.  A key tool used throughout is the Bounded Geodesic Image Theorem \cite{MMII}.  This theorem allows us to control the length of geodesics in subspaces.  

This method cannot be made to generalize to pants graphs in general since any pants graph of a surface with complexity higher than $3$ is not strongly relatively hyperbolic \cite{BM}.  Although, this method may be able to be used for other graphs which are variants on the pants graph.

One might consider approaching this problem by finding the sectional curvature of Teichm\"uller space and using the quasi-isometry to inform on the hyperbolicity constant of the pants graph.  If the sectional curvature is bounded away from zero, one can relate the curvature of the space to the hyperbolicity constant of the space.  However, the sectional curvature of Teichm\"uller sapce is not bounded away from zero \cite{Huang}.  Therefore, this technique cannot be used. 

\textbf{Acknowledgments:}  I would like to thank my advisor, Jeff Brock, for suggesting this problem, support, and helpful conversations.  I'd also like to thank Tarik Aougab and Peihong Jiang for helpful conversations.  

\section{Preliminaries}

\subsection{Hyperbolicity}
Assume $\Gamma$ is a connected graph which we equip with the metric where each edge has length 1.  We give two definitions of a graph being hyperbolic.  A triangle in $\Gamma$ is $k$-\textit{centered} if there exists a vertex $c \in \Gamma$ such that $c$ is distance $\leq k$ from each of its three sides.  $\Gamma$ is $k$-\textit{centered hyperbolic} if all geodesic triangles (triangles whose edges are geodesics) are $k$-centered.  
We say a triangle in $\Gamma$ is $\delta$-\textit{thin} if each side of the triangle is contained in the $\delta$-neighborhood of the other two sides for some $\delta \in \R$.  A graph is $\delta$-\textit{thin hyperbolic} if all geodesic triangles are $\delta$-thin.  Note that $\delta$-thin hyperbolic and $k$-centered hyperbolic are equivalent up to a linear factor \cite{ABC}.  

\begin{lem}
\label{centered to thin}
If $\Gamma$ is $k$-centered hyperbolic then $\Gamma$ is $4k$-thin hyperbolic.
\end{lem}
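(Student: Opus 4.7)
The plan is to take a geodesic triangle $T = \triangle(x,y,z)$ in $\Gamma$ with a $k$-center $c$ and use it to produce, on each side, a distinguished ``projection'' point close to $c$. By hypothesis there exist points $p_x \in [yz]$, $p_y \in [xz]$, $p_z \in [xy]$ each within distance $k$ of $c$, and hence pairwise within $2k$ of each other. These three points partition each side of $T$ into two subsegments, and the goal is to show that every point of $[xy]$ is within $4k$ of $[xz] \cup [yz]$ (the other sides being symmetric).

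Fix $p \in [xy]$; by the symmetry swapping $x$ and $y$ I may assume $p \in [x, p_z]$. There are two easy regimes. If $d(p, p_z) \leq 2k$ then by the triangle inequality $d(p, p_y) \leq d(p, p_z) + d(p_z, p_y) \leq 2k + 2k = 4k$, and $p_y$ lies on $[xz]$. If instead $d(x, p) \leq 2k$, then $p$ is within $2k$ of the vertex $x \in [xz]$. The real content of the lemma is the ``middle'' regime, where $p$ sits deep in $[x, p_z]$, far from both endpoints. For this case I would introduce the auxiliary geodesic sub-triangle with vertices $x, p_z, p_y$ — whose third side has length at most $2k$ — and apply the $k$-centered hypothesis to it (and, if necessary, to further nested sub-triangles) to produce a comparison point on $[xz]$ close to $p$.

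The main obstacle will be verifying that this chained application of the $k$-centered hypothesis yields the constant $4k$ rather than some larger multiple of $k$. A single invocation of centeredness on $\triangle(x, p_z, p_y)$ forces its center to lie near the short side $[p_z, p_y]$ — one of its three projected points must lie on that side, which has length at most $2k$ — and so it only directly controls the $p_z$-end of the segment $[x, p_z]$. To reach a point $p$ deep in the interior I expect to iterate, at each stage peeling off a controlled portion of the segment and passing to a strictly shorter sub-segment. The delicate quantitative step is checking that the accumulated error in this iteration is bounded by $4k$, so that the bookkeeping closes up without blow-up. Once this middle-regime bound is in hand, combining the three cases (and the symmetric argument on $[p_z, y]$ compared with $[p_x, y] \subset [yz]$) yields the $4k$-thin estimate on every side, which is the general equivalence between centered and thin hyperbolicity referenced in \cite{ABC}.
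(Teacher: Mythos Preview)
Your setup and the easy regimes match the paper's, but the paper avoids iteration entirely by choosing the auxiliary triangle to depend on the point you are trying to control, rather than using the fixed triangle $\triangle(x,p_z,p_y)$.

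Concretely (in your notation): given $t\in[x,p_z]$ with $d(t,[x,p_y])>2k$, let $u$ be the point of $[t,p_z]$ \emph{nearest to $t$} for which there exists $u'\in[x,p_y]$ with $d(u,u')=2k$; such a $u$ exists because $p_z$ itself is within $2k$ of $p_y$. Now apply the $k$-centered hypothesis \emph{once} to $\triangle(x,u,u')$, obtaining a center $q$ and projections $a\in[x,u]$, $b\in[u,u']$, $c\in[x,u']$ each within $k$ of $q$. Since $d(a,c)\le 2k$ and $c\in[x,p_y]$, the minimality of $u$ forces $a\in[x,t]$, so $t\in[a,u]$. But $d(a,u)\le d(a,b)+d(b,u)\le 2k+2k=4k$, hence either $d(t,a)\le 2k$ (giving $d(t,c)\le 4k$) or $d(t,u)\le 2k$ (giving $d(t,u')\le 4k$). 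One shot, no recursion.

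Your proposed iteration on nested triangles $\triangle(x,a_n,b_n)$ is not wrong about the constant---each step really does produce a $4k$-close companion on $[x,p_y]$ for points in $[a_{n+1},a_n]$---but there is no mechanism forcing $a_{n+1}$ to be strictly closer to $x$ than $a_n$. The centered hypothesis gives no lower bound on $d(a_n,a_{n+1})$, so the ``peeled'' sub-segments need not exhaust $[x,p_z]$ and the induction may stall. The paper's device of letting the auxiliary vertex $u$ depend on $t$ is precisely what forces the projection $a$ to land on the $x$-side of $t$, trapping $t$ in a segment of length at most $4k$ and closing the argument without any recursion.
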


The following proof is very similar to the proof of an existence of a global minsize of triangles implies slim triangles in \cite{ABC} (Proposition 2.1).
\begin{proof}
We denote $[a,b]$ as a geodesic between $a$ and $b$; if $c \in [a,b]$ then $[a, c]$ or $[c,b]$ refers to the subpath of $[a,b]$ with $c$ as one of the endpoints.  
Consider the triangle $xyz$ and assume it is $k$-centered.  Let $p$ be the centered point and $x'$ be the point on the edge $[y,z]$ closest to $p$.  Similarly define $y'$ and $z'$.  Suppose there is a point $t \in [x,z']$ such that $d(t, [x, y']) > 2k$.  Let $u$ be the point in $[t, z']$ nearest to $t$ such that $d(u, u') = 2k$ for some point $u' \in [x, y']$, see Figure \ref{center to thin figure}.  

Consider the geodesic triangle $uu'x$.  There exists points $a$, $b$, and $c$ on the three sides of $uu'x$ that are less than or equal to $k$ away from some point $q$, see Figure \ref{center to thin figure}.  Since $a \in [x, u]$, by assumption $a$ does not lie in $[t, u]$ and $d(u, a) \leq 4k$.  So $d(t, u') \leq 4k$ or $d(t, c) \leq 4k$, making the triangle $xyz$ $4k$-thin.
\end{proof}

\begin{figure}
\centering
\includegraphics[trim = {0, 4.3in, 0, 3.7in}, clip, scale = .4]{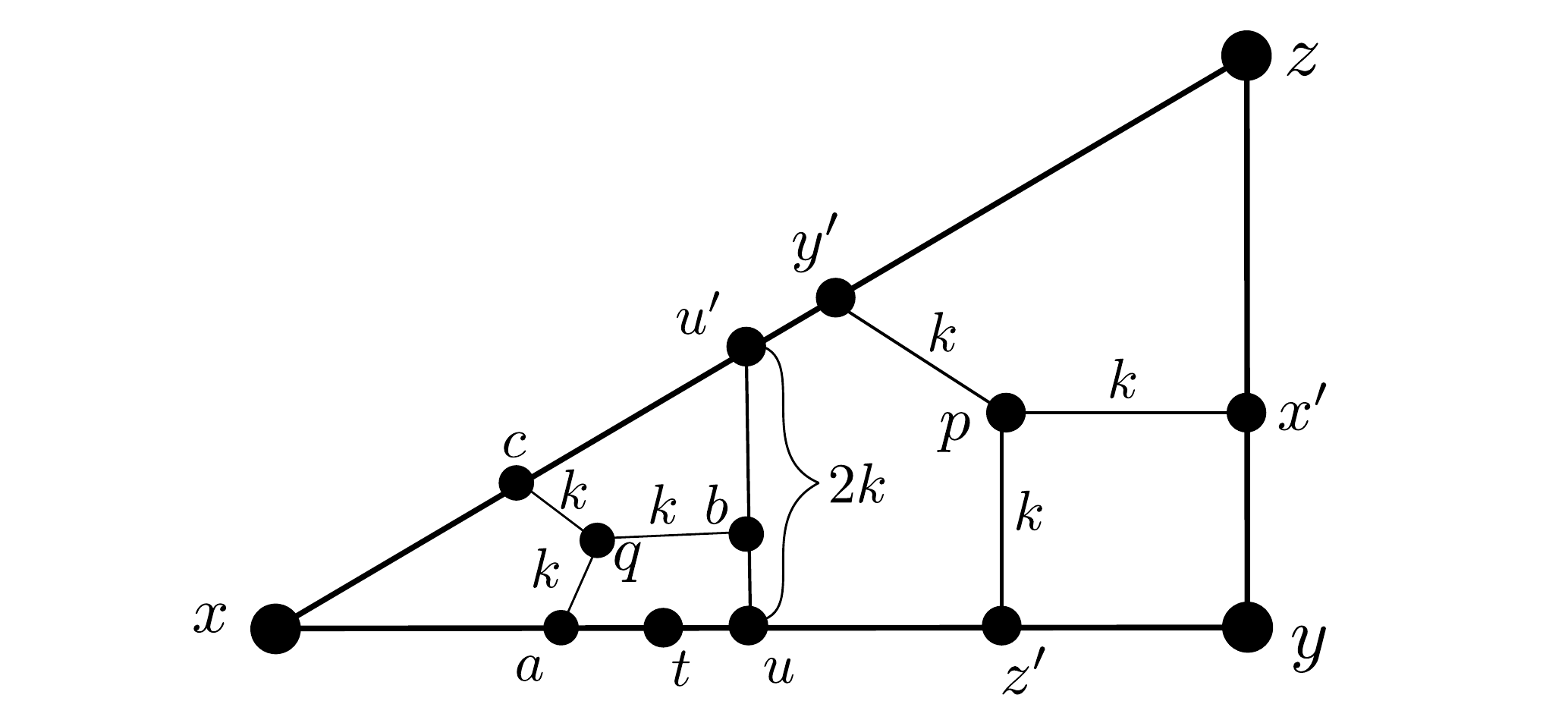}
\caption{}
\label{center to thin figure}
\end{figure}

Bowditch shows, in \cite{Bow} Proposition 3.1, that we don't always have to work with geodesic triangles to show hyperbolicity of a graph. 
\begin{prop}[\cite{Bow}]
\label{subset hyperbolic}
Given $h \geq 0$, there exists $\delta \geq 0$ with the following property.  Suppose that $G$ is a connected graph, and that for each $x, y \in V(G)$, we have associated a connected subgraph, $\mcL(x,y) \subset G$, with $x, y \in \mcL(x,y)$.  Suppose that: 
\begin{enumerate}
\item for all $x, y, z \in V(G)$, 
\begin{equation*}
\mcL(x,y) \subset N_h(\mcL(x,z) \cup \mcL(z, y))
\end{equation*}
and
\item for any $x, y \in V(G)$ with $d(x,y) \leq 1$, the diameter of $\mcL(x,y)$ in $G$ is at most $h$.
\end{enumerate}
Then $G$ is $\delta$-thin hyperbolic.  In fact, we can take any $\delta \geq (3m-10h)/2$, where $m$ is any positive real number satisfying 
\begin{equation*}
2h(6 + \log_2(m+2)) \leq m.
\end{equation*}
\end{prop}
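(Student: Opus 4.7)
The plan is to build a coarse-geodesic system out of the family $\mathcal{L}$, deduce that every triangle has a bounded-radius centre in the sense of Lemma~\ref{centered to thin}, and then convert centered hyperbolicity to thin hyperbolicity at the cost of a factor of $4$. I expect the proof to split into three stages: (i) bound $\mathrm{diam}_G(\mathcal{L}(x,y))$ in terms of $d(x,y)$, (ii) produce a coarse centre for any $\mathcal{L}$-triangle, and (iii) carefully bookkeep constants to convert the resulting centered hyperbolicity estimate into the advertised thin hyperbolicity constant.

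For stage (i), I would take a $G$-geodesic $x = x_0, x_1, \ldots, x_n = y$ and iterate property (1) to show that $\mathcal{L}(x,y)$ lies in the $h$-neighbourhood of $\bigcup_i \mathcal{L}(x_i, x_{i+1})$. Rather than subdividing linearly, I would bisect: apply (1) with a near-midpoint $w$ of $[x,y]$ to get $\mathcal{L}(x,y) \subset N_h(\mathcal{L}(x,w) \cup \mathcal{L}(w,y))$, and then recurse on each half. After roughly $\log_2 d(x,y)$ halvings one reaches adjacent vertices, to which property (2) applies and contributes at most $h$. Tracking how the $h$-error compounds across levels yields a diameter estimate of the form $\mathrm{diam}_G(\mathcal{L}(x,y)) \le 2h(\log_2(d(x,y)+2) + c_0)$ for a small explicit $c_0$; this is exactly what forces the hypothesis $2h(6 + \log_2(m+2)) \le m$.

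For stage (ii), given three vertices $x,y,z$, property (1) says $\mathcal{L}(x,y) \subset N_h(\mathcal{L}(x,z) \cup \mathcal{L}(z,y))$. Since $\mathcal{L}(x,y)$ is connected and covered by the two closed $h$-neighbourhoods of the other sides, a connectedness (intermediate-value) argument produces a point $p \in \mathcal{L}(x,y)$ within $h$ of both $\mathcal{L}(x,z)$ and $\mathcal{L}(z,y)$. Via the estimate from stage (i), this $p$ sits at controlled distance from each of the three actual geodesic sides of a triangle $xyz$ of perimeter $\le m$. Feeding the resulting centre radius $k = h(6 + \log_2(m+2))$ into Lemma~\ref{centered to thin} gives $4k$-thin; combined with additive corrections of order $h$ from comparing $\mathcal{L}$-paths to geodesics on each of the three sides, the arithmetic collapses to $\delta \ge (3m - 10h)/2$.

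The hard part will be stage (i). The midpoint of a $G$-geodesic has no a priori compatibility with $\mathcal{L}$, so each halving must be reconciled with property (1) using an intermediate vertex, and one must be careful that the neighbourhood errors at successive levels add rather than multiply — otherwise the bound degrades from logarithmic to linear in $d(x,y)$ and the statement fails. Getting the constants $6$ and $10$ exactly right is a matter of pushing the bookkeeping through without slack; I expect that substantially all of the technical content of the proof lives in this step, with stages (ii) and (iii) being relatively formal once (i) is in place.
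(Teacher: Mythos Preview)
The paper does not supply its own proof of this proposition: it is quoted from Bowditch (Proposition~3.1 of \cite{Bow}) and used as a black box, so there is no argument in the paper to compare your proposal against.

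That said, your sketch captures the skeleton of Bowditch's ``guessing geodesics'' argument --- the bisection in stage~(i) is exactly how one shows $\mcL(x,y)$ lies in an $O(h\log_2 d(x,y))$-neighbourhood of the geodesic $[x,y]$. The gap is in stage~(ii). As written, you only handle triangles of perimeter at most $m$: your centre $p$ lies within roughly $h(6+\log_2(m+2))$ of each geodesic side \emph{because} each side has length at most $m$, which is the regime where the stage~(i) estimate is useful. You then invoke Lemma~\ref{centered to thin}, but that lemma requires \emph{all} geodesic triangles to be $k$-centered, not merely those of bounded perimeter; you have not said how to pass from small triangles to arbitrary ones, and there is no automatic local-to-global principle that does this for you.

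In Bowditch's actual argument the inequality $2h(6+\log_2(m+2))\le m$ is not used to bound triangles of size $m$. It is used in a maximal-distance (detour) argument to show the \emph{reverse} inclusion to your stage~(i): every geodesic $[x,y]$ lies in a uniform $m$-neighbourhood of $\mcL(x,y)$, with $m$ independent of $d(x,y)$. One takes a point $p\in[x,y]$ maximising the distance $D$ to $\mcL(x,y)$, looks at nearby geodesic points $a,b$ at distance $2D$ on either side, and combines stage~(i) applied to $\mcL(a,b)$ (length $\le 4D$) with two applications of property~(1) to force $D$ to satisfy an inequality of the form $D\le h\log_2(D)+O(h)$, hence $D\le m$. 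Once geodesics and $\mcL$-paths are at uniform Hausdorff distance, the $h$-thinness of $\mcL$-triangles transfers directly to geodesic triangles for \emph{all} sizes. This reverse-inclusion step is the substantive part of the proof, and it is absent from your outline.
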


\subsection{Graphs}

Let $S = S_{g,p}$ be a surface where $g$ is the genus and $p$ is the number of punctures. We define $\xi(S_{g,p}) = 3g + p -3$ and refer to $\xi(S_{g,p})$ as the complexity of $S_{g,p}$.  
When $\xi(S) > 1$ the curve graph of $S$, $\mcC(S)$, originally introduced by Harvey in \cite{Harvey}, is a graph whose vertices are homotopy classes of essential simple closed curves on $S$ and there is an edge between two vertices if the curves can be realized disjointly, up to isotopy.  From here on when we talk about curves we really mean a representative of the homotopy class of an essential, non-peripheral, simple closed curve.  When $\xi(S) = 1$, the definition of the curve graph is slightly altered in order to have a non-trivial graph: the vertices have the same definition, but there is an edge between two curves if they have minimal intersection number.  We can similarly define the \textit{arc and curve graph}, $\mcA\mcC(S)$, where a vertex is either a homotopy class of curves or homotopy class of arcs and the edges represent disjointness.  This definition is the same for all surfaces such that $\xi(S) > 0$.  

A related graph associated to a surface is the pants graph.  We call a maximal set of disjoint curves on a surface a \textit{pants decomposition}.  For $\xi(S) \geq 1$ the \textit{pants graph}, denoted $\mcP(S)$, of a surface $S$ is a graph whose vertices are homotopy classes of pants decompositions and there exists an edge between two pants decompositions if they are related by an elementary move.  Pants decompositions $\alpha$ and $\beta$ differ by an elementary move if one curve, $c$, from $\alpha$ can be deleted and replaced by a curve that intersects $c$ minimally to obtain $\beta$, see Figure \ref{elementary moves}.

We equip both graphs with the metric where each edge is length 1.  Then $\mcC(S)$ and $\mcP(S)$ are complete geodesic metric spaces.

\begin{figure}
\centering
\includegraphics[trim = {0, 6.3in, 0, 4in}, clip, scale = .7]{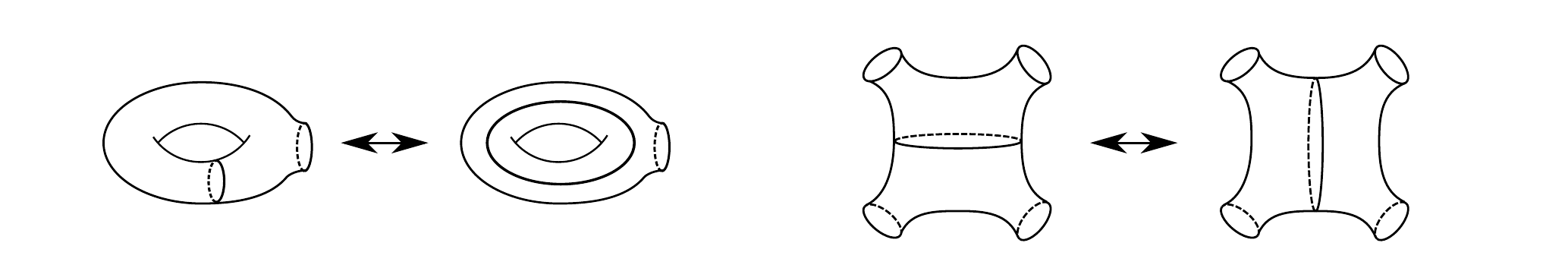}
\caption{These are the two elementary moves that form edges in $\mcP(S)$.}
\label{elementary moves}
\end{figure}

The hyperbolicity of these graphs have been studied before.

\begin{thm}[\cite{HPW}]
\label{curve hyp}
For any hyperbolic surface $S$, $\mcC(S)$ is $17$-centered hyperbolic.
\end{thm}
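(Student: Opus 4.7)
The plan is to follow the unicorn-path approach of Hensel, Przytycki, and Webb. Given two vertices $\alpha, \beta$ of the arc-and-curve graph $\mcA\mcC(S)$ realized with minimal intersection and equipped with an ordering of their intersection points (via an orientation or a choice of endpoints in the arc case), for each $p \in \alpha \cap \beta$ form the \emph{unicorn} vertex $u_p$ by concatenating the initial subarc of $\alpha$ up to $p$ with the terminal subarc of $\beta$ from $p$, discarding inessential components. Listing these in the natural order produces a sequence $\mathcal{U}(\alpha, \beta)$ of vertices in $\mcA\mcC(S)$; since consecutive unicorns differ only near one intersection point, they can be realized disjointly, so $\mathcal{U}(\alpha, \beta)$ is an edge path.

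The heart of the proof is a \emph{slim unicorn triangle} property: for any three vertices $\alpha, \beta, \gamma$ and any unicorn $u \in \mathcal{U}(\alpha, \beta)$, some vertex of $\mathcal{U}(\alpha, \gamma) \cup \mathcal{U}(\gamma, \beta)$ is disjoint from $u$. This is verified by an explicit combinatorial surgery, splitting into cases according to where $\gamma$ meets $u$ relative to its cut point $p$. Combined with the observation that $\mathcal{U}(\alpha, \beta)$ has uniformly bounded diameter whenever $d(\alpha, \beta) \leq 1$, the family $\{\mathcal{U}(\alpha, \beta)\}$ satisfies the hypotheses of Bowditch's Proposition \ref{subset hyperbolic} with a small explicit $h$, yielding uniform hyperbolicity of $\mcA\mcC(S)$ with an explicit thinness constant.

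Next, an induction on $|\alpha \cap \beta|$ using the same surgery shows that unicorn paths themselves fellow-travel geodesics with a small explicit constant, so the slim-unicorn information promotes to a slim property for genuine geodesic triangles. To transfer from $\mcA\mcC(S)$ to $\mcC(S)$, I would use the standard coarse map sending an arc to a boundary component of a regular neighborhood of its union with $\partial S$; this map is uniformly Lipschitz and changes distances and centers by at most a small explicit additive amount. Chaining the three estimates — slim-unicorn surgery, unicorn-to-geodesic comparison, and arc-to-curve surgery — then gives centered hyperbolicity of $\mcC(S)$ with constant at most $17$.

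The main obstacle is keeping every additive and multiplicative loss tight. Each of the three steps above produces a small constant in its own right, but naively passing from thin-hyperbolicity to centered-hyperbolicity via Lemma \ref{centered to thin} would multiply by $4$ and already overshoot $17$. The careful argument therefore produces a center directly from the unicorn triangle, using the explicit distinguished vertex that the surgery in the slim-triangle step naturally singles out, rather than first extracting thinness and then recovering a center. Making this direct center construction compatible with the arc-to-curve passage, so that the three increments add up to at most $17$, is where the bookkeeping is delicate.
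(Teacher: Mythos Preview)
The paper does not prove this theorem; it is quoted as a black box from \cite{HPW} and used as input to the later arguments. There is no ``paper's own proof'' to compare against. Your sketch is a faithful outline of the Hensel--Przytycki--Webb argument (unicorn paths in the arc graph, the $1$-slim unicorn-triangle lemma, direct extraction of a center, and the arc-to-curve surgery), so it is consistent with the cited source rather than with anything in the present paper.
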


Brock and Farb showed:
\begin{thm}[\cite{BF}]
For any hyperbolic surface $S$, $\mcP(S)$ is hyperbolic if and only if $\xi(S) \leq 2$.
\end{thm}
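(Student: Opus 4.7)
The plan is to split the biconditional into its two directions. For the ``only if'' direction I would show that when $\xi(S) \geq 3$ a quasi-isometrically embedded flat $\Z^2$ sits inside $\mcP(S)$, which is incompatible with hyperbolicity. For the ``if'' direction I would handle $\xi(S) = 1$ directly, noting that $\mcP(S)$ coincides with the Farey graph and is therefore hyperbolic, and tackle $\xi(S) = 2$ by applying Bowditch's criterion (Proposition \ref{subset hyperbolic}) to a family of paths built from the Masur--Minsky hierarchy machinery.

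For the non-hyperbolicity direction, when $\xi(S) \geq 3$ there exist two disjoint essential subsurfaces $Y_1, Y_2 \subset S$ with $\xi(Y_i) \geq 1$, together with a pants decomposition $P_0$ of $S \setminus (Y_1 \cup Y_2)$ (including the boundary curves $\partial Y_i$). The collection of pants decompositions of the form $P_0 \cup Q_1 \cup Q_2$, where $Q_i$ is a pants decomposition of $Y_i$, sits in $\mcP(S)$ as an isometrically embedded subgraph isomorphic to $\mcP(Y_1) \times \mcP(Y_2)$, because elementary moves supported in disjoint subsurfaces commute. Since each $\mcP(Y_i)$ has infinite diameter (any pseudo-Anosov on $Y_i$ acts with positive translation length on $\mcP(Y_i)$), this yields a quasi-isometrically embedded $\Z^2 \hookrightarrow \mcP(S)$, and a hyperbolic space admits no such flat.

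For the hyperbolicity direction, the case $\xi(S) = 1$ reduces to observing that pants decompositions are single curves and the edge condition coincides with that of the Farey graph. For $\xi(S) = 2$, given $x, y \in \mcP(S)$ I would define $\mcL(x,y)$ by fixing a curve-graph geodesic $c_0, c_1, \dots, c_n$ in $\mcC(S)$ between curves $c_0 \in x$ and $c_n \in y$, choosing for each $i$ a pants decomposition $P_i$ containing $c_i$ (with $P_0 = x$ and $P_n = y$), and joining adjacent $P_i, P_{i+1}$ via a Farey-graph geodesic in $\mcP(S \setminus c_i)$, which has complexity $1$ by hypothesis. Bowditch's condition (2) is essentially immediate because elementary moves only alter one curve. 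The hard part will be verifying condition (1): $\mcL(x,y) \subset N_h(\mcL(x,z) \cup \mcL(z,y))$ for all triples. This reduces to showing that these hierarchy paths with nearby endpoints fellow-travel, which is precisely where the Bounded Geodesic Image Theorem is essential: it controls the annular and complexity-one subsurface projections and ensures that the Farey-filler subpaths do not stray far when a single endpoint is perturbed. Packaging the curve-graph step (controlled by Theorem \ref{curve hyp}) and the Farey-filler step into a single uniform constant $h$ is the technical heart of the argument and the main obstacle.
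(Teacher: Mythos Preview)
The paper does not prove this theorem; it is quoted as a result of Brock--Farb \cite{BF} with no proof supplied. What the paper \emph{does} do, in Theorems~\ref{hierarchy k-centered} and~\ref{main thm 1}, is carry out an effective version of the $\xi(S)=2$ half of the ``if'' direction, and the method there is exactly the one you outline: build hierarchy paths, verify Bowditch's criterion (Proposition~\ref{subset hyperbolic}), and use the Bounded Geodesic Image Theorem to bound the subsurface fillers. So for the $\xi(S)=2$ case your plan and the paper's later argument coincide; the paper simply goes further and extracts explicit constants.

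Your sketch of the ``only if'' direction via disjoint subsurfaces and a product region is the standard argument and is correct in spirit, but one step is stated too strongly. The assertion that the subgraph on $\{P_0 \cup Q_1 \cup Q_2\}$ is \emph{isometrically} embedded in $\mcP(S)$ is not an immediate consequence of moves commuting; it requires knowing that a geodesic in $\mcP(S)$ between two such decompositions can be taken to lie entirely in the product region, which is a genuine convexity statement (proved, for instance, by Aramayona--Parlier--Shackleton). For your purposes a quasi-isometric embedding suffices, and that follows more cheaply from the existence of coarsely Lipschitz projections $\mcP(S) \to \mcP(Y_i)$ via subsurface projection. The $\xi(S)=1$ case and the entire ``only if'' direction are not treated anywhere in this paper, so there is nothing to compare your sketch against there.
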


\subsection{Relative graphs}

Let $S$ be a hyperbolic surface such that $\xi(S) \geq 3$.  We say that a curve $c \in \mcC(S)$ is \textit{domain separating} if $S \backslash c$ has two components of positive complexity.  Each domain separating curve $c$ determines a set in $\mcP(S)$, $X_c = \{\alpha \in \mcP(S)  | c \in \alpha \}$.  To form the \textit{relative pants graph}, denoted $\mcP_{rel}(S)$, we add a point $p_c$ for each domain separating curve and an edge from $p_c$ to each vertex in $X_c$, where each edge has length $1$.  Effectively, we have made the set $X_c$ have diameter $2$ in the relative pants graph.  

Brock and Masur have shown:

\begin{thm}[\cite{BM}]
For $S$ such that $\xi(S) = 3$, $\mcP_{rel}(S)$ is hyperbolic.  
\end{thm}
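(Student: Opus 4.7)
The plan is to apply Bowditch's criterion (Proposition~\ref{subset hyperbolic}) by associating to each pair $x, y \in \mcP_{rel}(S)$ a connected subgraph $\mcL(x,y)$ containing $x$ and $y$, and then verifying its two hypotheses. For pants decompositions $x, y$, I would take $\mcL(x,y)$ to be the image in $\mcP_{rel}(S)$ of a resolution of a Masur--Minsky hierarchy $H(x,y)$: a top-level geodesic in $\mcC(S)$, together with subordinate curve-graph geodesics in component domains, assembled into a sequence of pants decompositions related by elementary moves. For relative vertices $p_c$, I extend $\mcL$ by concatenating a single edge from $p_c$ to some vertex of $X_c$ at either end.

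Condition (2) is handled case by case. If $d_{\mcP_{rel}(S)}(x,y) \le 1$, then either $x = y$, or $x$ and $y$ differ by an elementary move in $\mcP(S)$ so $\mcL(x,y)$ is an edge, or $x, y \in X_c$ for a common domain-separating curve $c$; in the last case, a hierarchy between $x$ and $y$ takes place in the complement of $c$ and its resolution never deletes $c$, so $\mcL(x,y) \subset X_c$, which has diameter $2$ in $\mcP_{rel}(S)$. For condition (1), I would invoke the fellow-traveling of hierarchy paths: by Masur--Minsky's consistency and Common Links results, two hierarchies sharing an endpoint produce resolutions whose subsurface projections agree up to a uniform additive error. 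The key technical input is the Bounded Geodesic Image Theorem: whenever the projection of a hierarchy path to $\mcC(Y)$ is long, $\partial Y$ is a curve of every pants decomposition along a correspondingly long subsegment of the resolution, so that subsegment lies inside $X_{\partial Y}$. When $\xi(S) = 3$, every proper essential subsurface of positive complexity has domain-separating boundary, so such subsegments collapse to bounded diameter in $\mcP_{rel}(S)$. Once these ``deep'' excursions are collapsed, the fellow-traveling of $\mcL(x,y)$, $\mcL(x,z)$, and $\mcL(z,y)$ is controlled by the top-level geodesic in $\mcC(S)$, whose slim triangle behavior (Theorem~\ref{curve hyp}) yields Bowditch's inclusion condition.

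The main obstacle will be uniformly packaging subsurfaces that are active in one hierarchy but not another. For three hierarchies $H(x,y)$, $H(x,z)$, $H(z,y)$, a subsurface $Y$ with a long projection on one side must either appear, up to bounded error, on a matching side, or its contribution must be absorbed by the relativization over $X_{\partial Y}$. In complexity $3$ this absorption is structurally clean because a long projection forces $\partial Y$ to be domain-separating, but tracking the constants across the triangle --- and ensuring the estimate remains additive under the concatenations needed to interface with the $p_c$ vertices --- is the delicate bookkeeping step. Once in place, Bowditch's proposition yields $\delta$-thin hyperbolicity of $\mcP_{rel}(S)$ with $\delta$ depending only on the Masur--Minsky constants and the hyperbolicity constant of $\mcC(S)$, which also lays the groundwork for the effective bound pursued later in the paper.
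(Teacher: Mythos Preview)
Your overall framework---defining $\mcL(x,y)$ via hierarchy-type paths and applying Bowditch's criterion---matches the paper's strategy. However, there is a genuine gap in your handling of condition (1). You assert that ``when $\xi(S) = 3$, every proper essential subsurface of positive complexity has domain-separating boundary,'' and hence that long subsurface projections are absorbed by the relativization. This is false. If $c \in \mcC(S)$ is a non-domain-separating curve (for instance, a curve in $S_{0,6}$ bounding a pair of pants on one side, or a non-separating curve in $S_{2,0}$), then $Y = S \backslash c$ is a complexity-$2$ subsurface whose boundary $\partial Y = c$ is precisely \emph{not} domain-separating. A hierarchy can have arbitrarily long projection to such a $Y$, and the corresponding subsegment of the resolution lies in $X_c$, which is \emph{not} collapsed in $\mcP_{rel}(S)$. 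So these excursions are not absorbed, and the top-level slimness of $\mcC(S)$ alone does not control them.

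The paper addresses exactly this point by recursing one level down: its ``relative $3$-archies'' explicitly build, for each non-domain-separating $w$ on the main geodesic, a complexity-$2$ hierarchy in $\mcP(S \backslash w)$, and then invoke the complexity-$2$ result (Theorem~\ref{hierarchy k-centered}) to control the centering there. The case analysis in Theorem~\ref{relative hierarchy k-centered}---particularly Case~1, where the three main geodesics share a non-domain-separating curve $v$ and the argument reduces to a hierarchy triangle in $\mcP(S \backslash v)$---is precisely what fills the gap you left. Your sketch could be repaired by inserting this recursion, but as written it misidentifies which subsurfaces the relativization actually neutralizes: only the complexity-$1$ pieces (whose boundaries \emph{are} domain-separating in complexity $3$) get coned off, while the complexity-$2$ pieces must still be handled by hand.
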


\subsection{Paths in the Pants Graph}

Here we describe how we will get a path in $\mcP(S)$ if $\xi(S) =2$ or $\mcP_{rel}(S)$ if $\xi(S) = 3$.  The paths for $\mcP(S)$ are hierarchies and were originally introduced by Masur and Minsky in \cite{MMII} (in more generality than we will use here); the paths in $\mcP_{rel}(S)$ are motivated by hierarchies.

Take two pants decompositions, $\alpha = \{ \alpha_0, \alpha_1\}$ and $\beta = \{ \beta_0, \beta_1\}$, in $\mcP(S)$ where $S = S_{0,5}$ or $S_{1,2}$.  To create a hierarchy between $\alpha$ and $\beta$ first connect $\alpha_0$ and $\beta_0$ with a geodesic path in $\mcC(S)$.  
This geodesic is referred to as the \textit{main geodesic}, $g_{\alpha\beta} = \{ \alpha_0 = g_0, \ldots, g_n = \beta_0\}$. For each $g_i$, $0 \leq i \leq n$, connect $g_{i-1}$ to $g_{i+1}$ by a geodesic, $\gamma_i$, in $\mcC(S\backslash g_i)$, where $g_{-1} = \alpha_1$ and $g_{n+1} = \beta_1$.  
The collection of all of these geodesics is a \textit{hierarchy} between $\alpha$ and $\beta$, generally pictured as in Figure \ref{Hierarchy picture}.  We often refer to the geodesic $\gamma_i$ as the geodesics whose domain is $\mcC(S \backslash g_i)$ or the geodesic connecting $g_{i-1}$ and $g_{i+1}$. 
We can turn a hierarchy into a path in $\mcP(S)$ by looked at all edges in turn, as pictured in Figure \ref{Hierarchy picture}.  We will often blur the line between the hierarchy being a path in the pants graph or a collection of geodesics - and refer to both as the hierarchy between $\alpha$ and $\beta$.    

\begin{figure}
\centering
\includegraphics[trim = {0, 6.5in, 0, 1.5in}, clip, scale = .6]{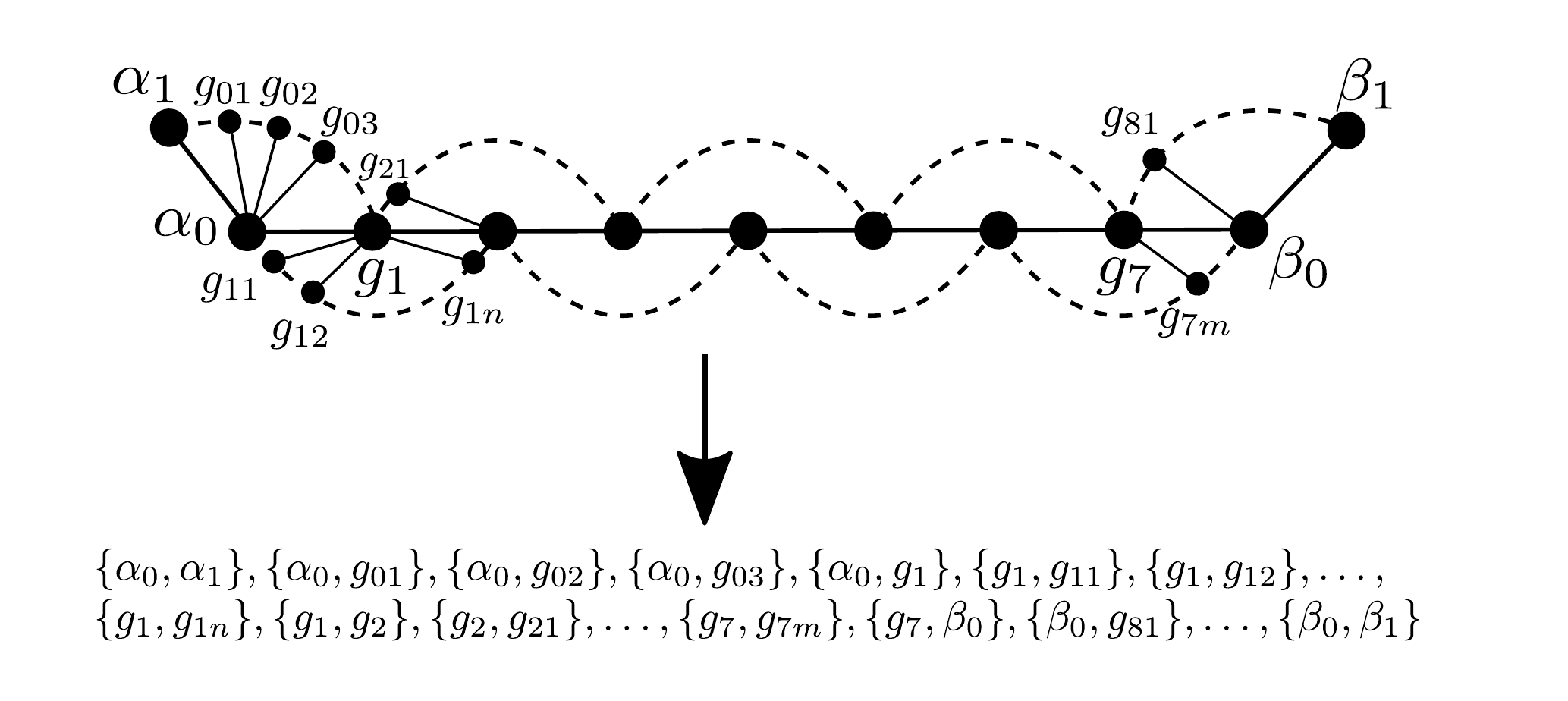}
\caption{What a hierarchy looks like in $S_{0,5}$ or $S_{1,2}$. Each edge represents a pants decomposition. The bottom gives the path in $\mcP(S)$ the hierarchy makes.}
\label{Hierarchy picture}
\end{figure}

\begin{figure}
\centering
\includegraphics[trim = {0, 5.5in, 0, 1.1in}, clip, scale = .7]{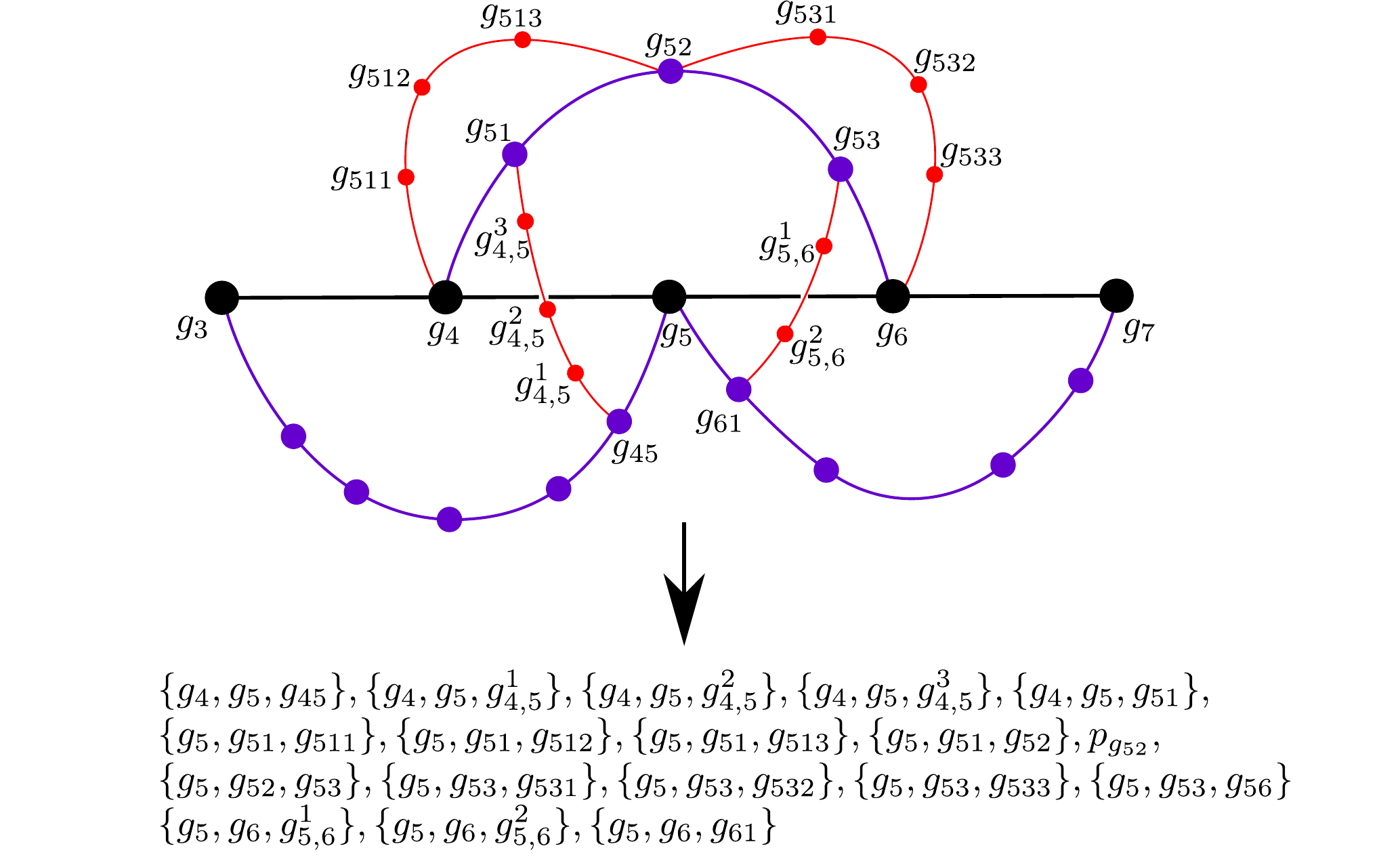}
\caption{The top represents part of a relative 3-archy for a surface with complexity 3.  Here we assume $g_{52}$ is domain separating and all other curves are non-domain separating.  The bottom gives the path in $\mcP(S)$ the relative 3-archy makes; it is the part of the path that contains $g_5$.}
\label{general hierarchy}
\end{figure}

Let $\xi(S) =3$.  We make a path in $\mcP_{rel}(S)$ using a similar technique. Take two pants decompositions in $\mcP_{rel}(S)$, $\alpha = \{\alpha_0, \alpha_1, \alpha_2\}$ and $\beta = \{\beta_0, \beta_1, \beta_2\}$.  Connect $\alpha_0$ to $\beta_0$ with a geodesic $g_{\alpha\beta}$ in $\mcC(S)$, we still refer to this as the main geodesic.  For every non-domain separating curve $w \in g$, connect $w^{-1}$ to $w^{+1}$ with a geodesic, $h$, in $\mcC(S \backslash w)$ where $w^{-1}$ and $w^{+1}$ are the curves before and after $w$ in $g$.  If $w = \alpha_0$ then $w^{-1} = \alpha_1$ and if $w = \beta_0$ then $w^{+} = \beta_1$.  Now for each non-domain separating curve $z \in h$ connect $z^{-1}$ to $z^{+1}$ with a geodesic in $\mcC(S \backslash (w \cup z))$, where $z^{-1}$ and $z^{+1}$ are the curves before and after $z$ in $h$.  If $z = w^{-1}$ then $z^{-1}$ is the curve preceding $w$ in the geodesic whose domain is $\mcC(S \backslash w^{-1})$. If $z = w^{+1}$ then $z^{+1}$ is the curve following $w$ in the geodesic whose domain is $\mcC(S \backslash w^{+1})$ (see Figure \ref{general hierarchy} (top)).  

We can get a path in $\mcP_{rel}(S)$ by a similar process as before - going along each of the edges.  Whenever we come across a domain separating curve, $c$, where $c$ is in the main geodesic or in a geodesic whose domain is $\mcC(S \backslash w)$ where $w$ is in the main geodesic, we add in the point $p_c$ into the path before moving on.  For an example see Figure \ref{general hierarchy}.  These paths are \textit{relative 3-archies}.  
As before, we will blur the line between the collection of geodesics and the path of a relative 3-archy.

When discussing hierarchies (or relative 3-archies), subsurface projections of curves or geodesics are involved.  The following maps are to define what is meant by subsurface projections \cite{MMII}. An \textit{essential subsurface} is a subsurface where each boundary component is essential. 

Let $\msP(X)$ be the set of subsets of $X$.  For a set $A$ we define $f(A) = \cup_{a \in A}f(a)$, for any map $f$. Take an essential, non-annular subsurface $Y \subset S$.  We define a map 
\begin{equation*}
\phi_Y: \mcC(S) \lra \msP(\mcA\mcC(Y))
\end{equation*} 
such that $\phi_Y(a)$ is the set of arcs and curves obtained from $a \cap Y$ when $\partial Y$ and $a$ are in minimal position.
Define another map
\begin{equation*}
\psi_Y : \msP(\mcA\mcC(Y)) \lra \msP(\mcC(Y))
\end{equation*}
such that if $a$ is a curve, then $\psi_Y(a) = a$, and if $b$ is an arc, then $\psi_Y(b)$ is the union of the non-trivial components of the regular neighborhood of $(b\cap Y) \cup \partial Y$ (see Figure \ref{nbhd}).  

\begin{figure}
\centering
\includegraphics[trim = {0, 2in, 0, 7.2in}, clip, scale = .5]{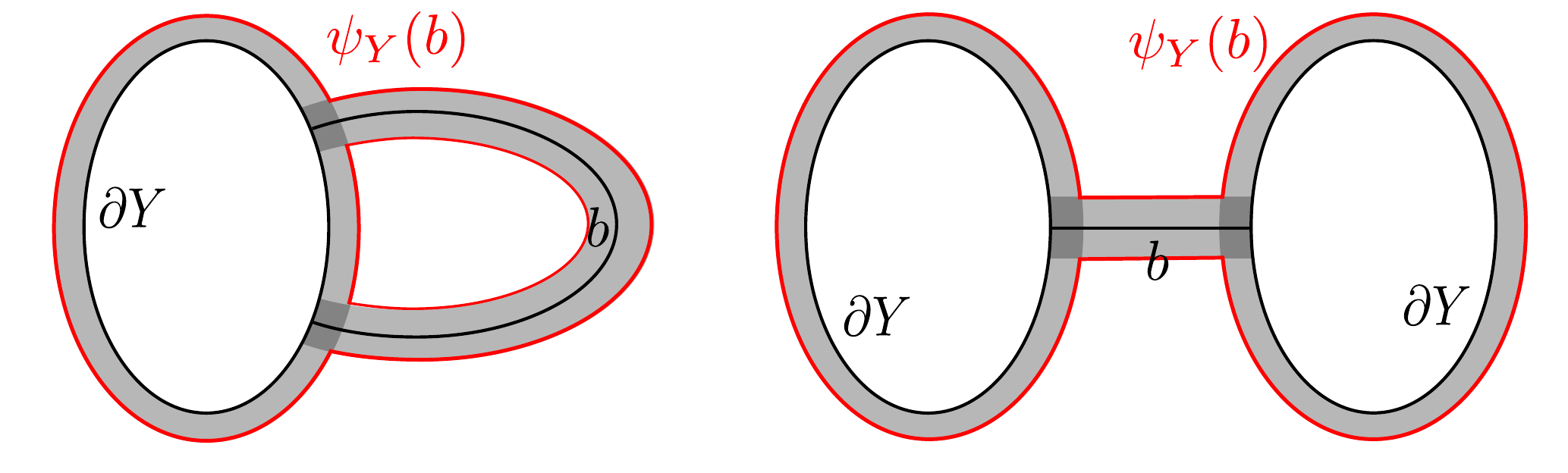}
\caption{The neighborhood of $(b \cap Y) \cup \partial Y$ is shaded with $\psi_Y(b)$ outlined in red.}
\label{nbhd}
\end{figure}

Composing these two maps we define the map 
\begin{align*}
\pi_Y: \mcC(S) &\lra \msP(\mcC(Y)) \\
c &\longmapsto \psi_Y(\phi_Y(c))
\end{align*}
We use this map to define distances in a subsurface: for any two sets $A$ and $B$ in $\mcC(S)$,
\begin{equation*}
d_Y(A, B) = d_Y(\pi_Y(A), \pi_Y(B)).
\end{equation*}
We often refer to this as the distance in the subsurface $Y$. 

The relationship between hierarchies and these maps give rise to some useful properties including the Bounded Geodesic Image Theorem which was originally proven by Masur-Minsky \cite{MMII}.  

\begin{thm}[Bounded Geodesic Image Theorem]
\label{bounded geodesic image}
Let $Y$ be a subsurface of $S$ with $\xi(Y) \neq 3$ and let $g$ be a geodesic segment, ray, or biinfinite line in $\mcC(S)$, such that $\pi_Y(v) \neq \emptyset$ for every vertex of $v$ of $g$.  There is a constant $M$ depending only on $\xi(S)$ such that $$\diam_Y(g) \leq M.$$
\end{thm}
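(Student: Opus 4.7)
The plan is to follow the original hierarchy-based argument of Masur--Minsky. By the triangle inequality applied to $\pi_Y(g) \subset \mcC(Y)$, it suffices to bound $d_Y(v_1,v_2)$ for any two vertices $v_1,v_2$ of $g$; a uniform bound $M'$ of this form gives $\diam_Y(g) \leq 2M'$.

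To bound $d_Y(v_1,v_2)$, I would construct a hierarchy $H$ between pants decompositions extending $v_1$ and $v_2$, arranged so that the subsegment of $g$ between $v_1$ and $v_2$ serves as the main geodesic of $H$. The crucial input, proved in \cite{MMII}, is a structural result: there is a threshold $M_1 = M_1(\xi(S))$ with the property that whenever $Z$ is an essential subsurface of $S$ and the $Z$-projection distance between the initial and terminal pants decompositions of $H$ exceeds $M_1$, there must be some geodesic of $H$ whose supporting domain is $Z$.

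Applying this with $Z = Y$: if $d_Y(v_1,v_2) > M_1$ then $H$ contains a geodesic $h_Y$ supported on $Y$. The hypothesis $\xi(Y) \neq 3$ ensures $Y$ is a proper subsurface of $S$ in the regime $\xi(S) \leq 3$ relevant to this paper, so $h_Y$ is strictly subordinate to the main geodesic in the subordination order on $H$. Walking up the subordination chain from $h_Y$ to the main geodesic, one eventually encounters a vertex $v$ of $g$ that coincides with a component of $\partial Y$; such a $v$ satisfies $\pi_Y(v) = \emptyset$, contradicting the standing hypothesis. Hence $d_Y(v_1,v_2) \leq M_1$, and we may take $M = 2M_1$.

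The main obstacle is the structural theorem invoked in the middle paragraph: showing that a large $Y$-projection between the endpoints of a hierarchy forces $Y$ to appear as a supporting domain of $H$. This rests on the full combinatorial machinery of tight geodesics, component domains, and the forward/backward subordination relations developed in \cite{MMII}, none of which is set up in the excerpt above. In practice one would either import this result wholesale from Masur--Minsky, or, in the low-complexity cases of interest here, replace it by a hand-tailored argument that explicitly tracks the projection distance along the few possible proper subsurfaces that can arise.
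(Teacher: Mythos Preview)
The paper does not prove this statement at all: it is quoted as a black box from Masur--Minsky \cite{MMII}, with the explicit bound $M \leq 100$ taken from Webb \cite{Webb}. So there is no ``paper's own proof'' to compare against.

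That said, your sketch is an accurate outline of the original Masur--Minsky argument. The logical skeleton --- build a hierarchy with the given geodesic on top, invoke the large-link/threshold theorem to force $Y$ to appear as a domain if its projection is large, then climb the subordination tower to locate a vertex of $g$ disjoint from $Y$ --- is exactly how \cite{MMII} proceeds, and you are candid that the threshold theorem is the real content and is being imported wholesale. One small point: your reading of the hypothesis ``$\xi(Y)\neq 3$'' as guaranteeing properness only works when $\xi(S)=3$; in the general theorem (and in the $\xi(S)=2$ applications later in the paper) properness of $Y$ is simply part of the standing setup, and the stated complexity restriction is idiosyncratic to this paper's formulation rather than something your argument should lean on.

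It is worth knowing that the bound $M\leq 100$ used throughout the paper does \emph{not} come from the hierarchy argument you sketch, whose constants are not effective in any practical sense. Webb's proof is a direct argument in the curve graph, bypassing hierarchies entirely, and it is that route which produces the uniform numerical constant. Since every quantitative conclusion in this paper feeds $M=100$ into the estimates, the hierarchy-based proof you outline establishes the qualitative statement but would not, on its own, support the explicit hyperbolicity constants claimed in Theorems~\ref{main thm 1} and~\ref{main thm 2}.
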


It can be shown that $M$ is at most $100$ for all surfaces \cite{Webb}.


\section {Hyperbolicity of Pants Graph for Complexity 2}

In this section we explore the hyperbolicity constant for the pants graph of surfaces with complexity $2$.  Before we state any results, some notation must be discussed.  
Throughout the paper we denote $[a, b]_\Sigma$ as a geodesic in $\mcC(\Sigma)$ connecting $a$ to $b$, for any surface $\Sigma$.  If a geodesic satisfying this is contained in a hierarchy (or relative 3-archy, in later sections) being discussed, $[a,b]_\Sigma$ denotes the geodesic in the hierarchy.

\begin{thm}
\label{hierarchy k-centered}
For $S = S_{0,5}, S_{1,2}$, hierarchy triangles in $\mcP(S)$ are $8,900$-centered.
\end{thm}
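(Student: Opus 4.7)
The plan is to show that any hierarchy triangle with vertices $\alpha,\beta,\gamma \in \mcP(S)$ and hierarchy sides $H_{\alpha\beta}, H_{\beta\gamma}, H_{\gamma\alpha}$ admits a pants decomposition $P$ within $\mcP(S)$-distance $8{,}900$ of each side. The strategy is to use hyperbolicity of the curve graph to produce a center curve in $\mcC(S)$, lift it to a pants decomposition, and then bound the pants graph distance from the lift to each hierarchy path via the Bounded Geodesic Image Theorem.

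First, I would apply Theorem \ref{curve hyp} to the geodesic triangle in $\mcC(S)$ formed by the three main geodesics $g_{\alpha\beta}, g_{\beta\gamma}, g_{\gamma\alpha}$. This produces a center curve $c \in \mcC(S)$ and vertices $c_{\alpha\beta} \in g_{\alpha\beta}$, $c_{\beta\gamma} \in g_{\beta\gamma}$, $c_{\gamma\alpha} \in g_{\gamma\alpha}$, each within $\mcC(S)$-distance $17$ of $c$. Choose any neighbor $c'$ of $c$ in $\mcC(S)$; since for $\xi(S)=2$ neighbors in the curve graph are disjoint curves, $P = \{c, c'\}$ is a pants decomposition, and this $P$ will be the candidate center in $\mcP(S)$.

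Next, I would bound $d_{\mcP(S)}(P, H_{\alpha\beta})$; the other two sides are handled identically. The hierarchy path visits a pants decomposition $Q = \{c_{\alpha\beta}, y\}$ where $y$ is some curve on the secondary geodesic at $c_{\alpha\beta}$ in $\mcC(S \backslash c_{\alpha\beta})$. Fix a curve graph geodesic $c_{\alpha\beta} = x_0, x_1, \ldots, x_n = c$ with $n \leq 17$; since consecutive $x_i, x_{i+1}$ are disjoint, $\{x_i, x_{i+1}\}$ is a pants decomposition and one may traverse
\[
Q \to \{x_0, x_1\} \to \{x_1, x_2\} \to \cdots \to \{x_{n-1}, x_n\} \to P
\]
in $\mcP(S)$. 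Each arrow stays inside a single Farey subgraph $\mcC(S \backslash x_i)$, and its length is bounded by the Bounded Geodesic Image Theorem (Theorem \ref{bounded geodesic image}) applied to an appropriate curve graph geodesic through $x_i$, contributing at most $M \leq 100$ per arrow.

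The main obstacle is controlling the two boundary arrows $Q \to \{x_0, x_1\}$ and $\{x_{n-1}, x_n\} \to P$, because the second coordinate $y$ of $Q$ is inherited from the hierarchy while $c'$ is an arbitrary neighbor of $c$. For the first arrow I would bound $d_{\mcC(S\backslash c_{\alpha\beta})}(y, x_1)$ by combining a BGI bound on the sub-segment of $g_{\alpha\beta}$ near $c_{\alpha\beta}$, which controls the projection of $x_1$, with a BGI bound on the secondary geodesic at $c_{\alpha\beta}$, which controls the projection of $y$. The second boundary arrow is handled analogously using an auxiliary curve graph geodesic near $c$. The subtle bookkeeping throughout is the requirement that BGI be applied to geodesics every vertex of which has nonempty projection to the relevant subsurface, which may force a case analysis on the position of $c$ relative to $c_{\alpha\beta}$ on the main geodesic. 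Summing the $n \leq 17$ pivot steps of cost at most $M$ each, together with the two boundary terms, yields the numerical constant $8{,}900$.
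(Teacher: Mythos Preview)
Your proposal has a genuine gap at the step where you claim each intermediate arrow $\{x_i,x_{i+1}\}\to\{x_{i+1},x_{i+2}\}$ has length at most $M$ via the Bounded Geodesic Image Theorem. That arrow lives in $\mcC(S\backslash x_{i+1})$ and has length $d_{S\backslash x_{i+1}}(x_i,x_{i+2})$; BGI does \emph{not} bound this. BGI bounds the diameter of the projection of a geodesic segment that avoids $x_{i+1}$, but your curve-graph geodesic $x_0,\ldots,x_n$ passes through $x_{i+1}$. Splitting it into $[x_0,x_i]$ and $[x_{i+2},x_n]$ only tells you that $x_0$ is close to $x_i$ and $x_{i+2}$ is close to $x_n$ in $\mcC(S\backslash x_{i+1})$; it says nothing about the distance between $x_i$ and $x_{i+2}$ there. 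In general that link can be arbitrarily large --- this is exactly the ``large link'' phenomenon that makes hierarchy paths long even when the main geodesic is short. Your numerics (roughly $17M$ plus two boundary terms) also do not recover $8{,}900$, which is a symptom of the same issue.

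The paper closes this gap with an idea you are missing: it chooses $c$ not as an arbitrary $17$-center but as one that \emph{minimizes} the maximum distance to the three main geodesics, and then runs a short-cut argument. For an intermediate vertex $w$ on $[c,v_{\beta\gamma}]_S$, one assumes $d_{S\backslash w}(w^{-1},w^{+1})>5M$ and applies BGI to two concatenated paths, each going from $w^{+1}$ around the triangle through a different vertex ($\beta_0$ or $\gamma_0$) and back to $w^{-1}$. BGI forces $w$ to lie on each such path, and the only locations not immediately contradicting geodesicity or the choice of $v_*$ force $w$ to be strictly closer than $c$ to all three main geodesics, contradicting minimality of $c$. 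This triangle-wide contradiction argument is what yields the $5M$ bound per link and hence the $16\cdot 5M+5M+4M=8{,}900$ total. The paper also separates off the case where all three main geodesics share a curve, which is handled directly in $\mcC(S\backslash v)$ and gives a much smaller constant; your single-case outline does not address how the boundary terms behave in that situation either.
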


\begin{proof}
Let $S = S_{0,5}$ or $S_{1,2}$.  Take three pants decompositions $\alpha = \{\alpha_0, \alpha_1\}$, $\beta = \{\beta_0, \beta_1\}$, and $\gamma = \{\gamma_0, \gamma_1\}$ in $S$.  Consider the triangle $\alpha\beta\gamma$ in $\mcP(S)$ where the edges are taken to be hierarchies instead of geodesics.  There are three cases:

\begin{enumerate}
\item All three main geodesics have a curve in common. 
\item Any two of the main geodesics share a curve, but not the third.
\item None of the main geodesics have common curves.
\end{enumerate}

In all three cases we will find a pants decomposition such that the hierarchy connecting this pants decomposition to each edge in $\alpha\beta\gamma$ is less than $8,900$.

\vspace{1ex}

\noindent \textbf{Case 1}:  Assume the main geodesics of all three edges share the curve $v \in \mcC(S)$.  Define $v_{\alpha \beta}^{-1}$ to be the curve on $g_{\alpha \beta}$ preceding $v$ and $v_{\alpha \beta}^{+1}$ the curve on $g_{\alpha \beta}$ following $v$ when viewing $g_{\alpha\beta}$ going from $\alpha_0$ to $\beta_0$.  
Similarly define $v_{\alpha \gamma}^{-1}$, $v_{\alpha \gamma}^{+1}$, $v_{\beta \gamma}^{-1}$, and $v_{\beta \gamma}^{+1}$.  See Figure \ref{Case 1}.

\begin{figure}
\centering
\includegraphics[trim = {0, 4.7in, 0, 1in}, clip, scale = .3]{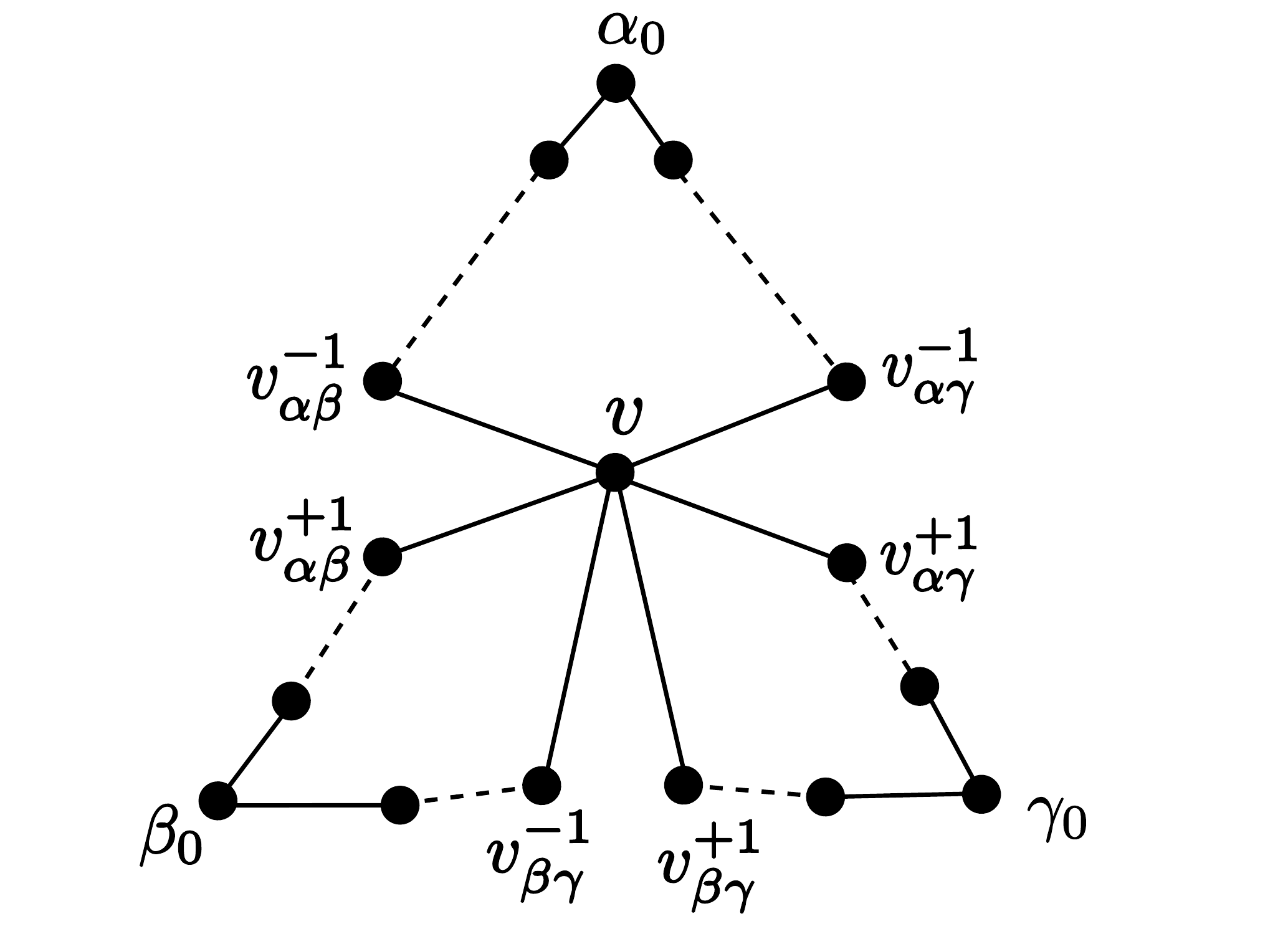}
\caption{Main geodesics of the hierarchy triangle in Case 1.}
\label{Case 1}
\end{figure}

We want to show the geodesics connecting $v_*^{-1}$ to $v_*^{+1}$ in $\mcC(S \backslash v)$ are not too far apart in $\mcC(S \backslash v)$. 
Connect $v_{\alpha\beta}^{-1}$ to $v_{\alpha\gamma}^{-1}$, $v_{\alpha\gamma}^{+1}$ to $v_{\beta \gamma}^{+1}$ and $v_{\beta\gamma}^{-1}$ to $v_{\alpha\beta}^{+1}$ by geodesics in $\mcC(S \backslash v)$.  We now have a loop in $\mcC(S\backslash v)$.  
Since all curves besides $v$ in $S$ intersect the subsurface $S \backslash v$ non-trivially we can apply the Bounded Geodesic Image Theorem on $[v_{\alpha \beta}^{-1}, \alpha_1]_S$ and $[\alpha_1, v_{\alpha \gamma}^{-1}]_S$ to get $d_{\mcC(S\backslash v)}(v_{\alpha \beta}^{-1}, v_{\alpha \gamma}^{-1}) \leq 2M$.  Similarly, $d_{\mcC(S\backslash v)}(v_{\alpha \gamma}^{+1}, v_{\beta \gamma}^{+1}) \leq 2M$ and $d_{\mcC(S\backslash v)}(v_{\beta \gamma}^{-1}, v_{\alpha\beta}^{+1}) \leq 2M$. 

Consider the geodesic triangle $v_{\alpha\beta}^{+1}v_{\alpha\gamma}^{-1}v_{\beta\gamma}^{+1}$ in $\mcC(S \backslash v)$.  We now have the picture in $\mcC(S \backslash v)$ as in Figure \ref{2 links are thin}.  By Theorem \ref{curve hyp}, the inner triangle is $17$ centered, call this center $z$.  Combining Theorem \ref{curve hyp} and Lemma \ref{centered to thin}, the outer three triangles are $17*4$-thin.  Therefore $z$ is at most $17*5 + 2M = 285$ away from each of the geodesics in the hierarchy triangle $\alpha\beta\gamma$ whose domain is $\mcC(S \backslash v)$. 

This all implies that $\alpha\beta\gamma$ is 285-centered at $\{v, z\}$.  

\begin{figure}
\centering
\includegraphics[trim = {0, 3.4in, 0, 4in}, clip, scale = .5]{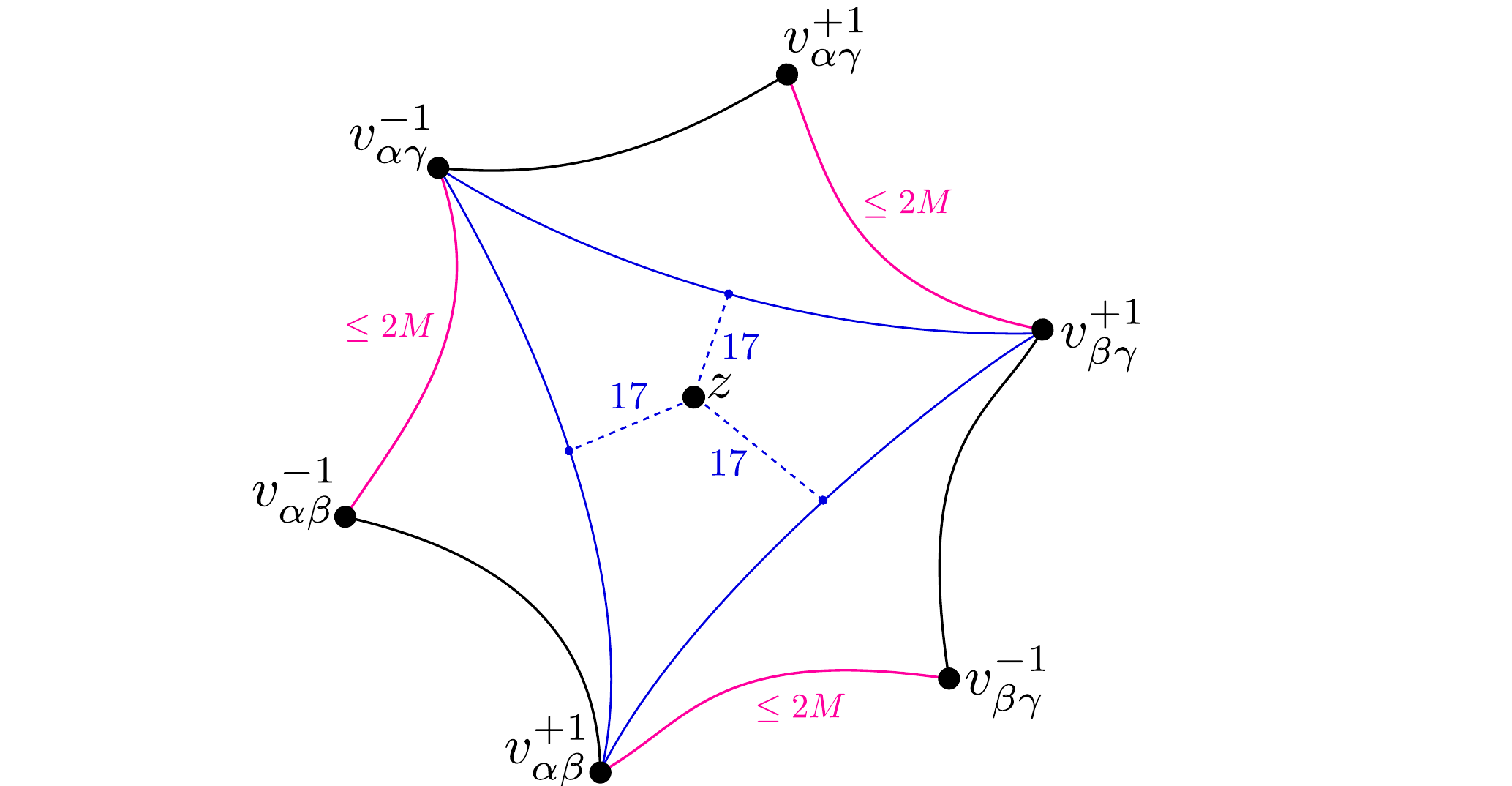}
\caption{Here all lines are geodesics in $\mcC(S \backslash v)$.  The black solid lines come from the geodesics in $\mcC(S \backslash v)$ in our original hierarchy triangle $\alpha\beta\gamma$.  The pink lines are the geodesics we added, and the blue lines make the inner triangle (which we also added).}
\label{2 links are thin}
\end{figure}

\vspace{1ex}

\noindent \textbf{Case 2}: Assume that at least two main geodesics share a common curve, but there is no point that all three main geodesics share the same curve.  First assume there is only one such shared curve.  Without loss of generality assume that $g_{\alpha\beta}$ and $g_{\alpha \gamma}$ share the curve $v$.  Then we can consider a new triangle with the main geodesics forming the triangle $v\beta_1\gamma_1$, see Figure \ref{Case 2}.  This new triangle has no shared curves so is covered by Case 3.

Now assume there is more than one shared curve between the main geodesics.  By definition of a geodesic, for any two main geodesics that share multiple curves, those curves have to show up in each main geodesic in the same order from either end, therefore we can just take the inner triangle where the edges share no curves and apply Case 3.

\begin{figure}
\centering
\includegraphics[trim = {0, 4in, 0, 1.2in}, clip, scale = .3]{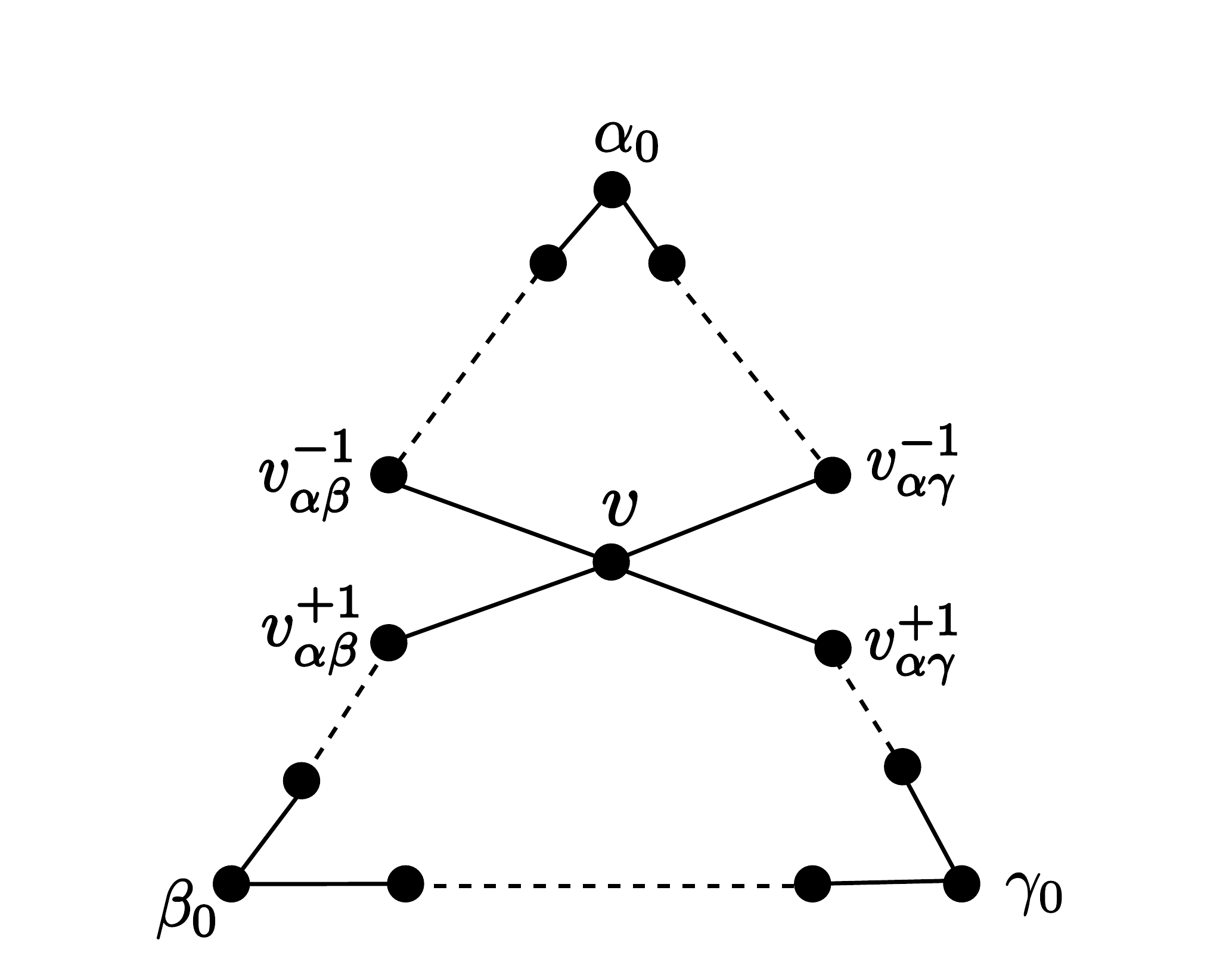}
\caption{Main geodesics of the hierarchy triangle in Case 2.}
\label{Case 2}
\end{figure}

\vspace{1ex}

\noindent \textbf{Case 3}: The argument given for this case is similar to the short cut argument in \cite{MMII}.  Assume none of the three main geodesics, $g_{\alpha \beta}, g_{\alpha \delta}$, and $g_{ \beta \delta}$ share a curve.  By Theorem \ref{curve hyp} there exists a curve $c \in \mcC(S)$ that is distance at most $17$ from $g_{\alpha \beta}, g_{\alpha \gamma}$, and $g_{ \beta \gamma}$; let $c$ be the curve that minimizes the distance from all three main geodesics.  Define $v_{\alpha \beta}$ to be the vertex in $g_{\alpha \beta}$ which has the least distance to $c$, and similarly define $v_{\alpha \gamma}$ and $v_{\beta \gamma}$.   

Consider the geodesic $[v_{\alpha\beta}, c]_S$ and let $c_0$ be the curve adjacent to $c$ in this geodesic.  Let $v_*^{-1}$ be the curve in $g_*$ that precedes $v_{*}$.  Now connect $\{v_{\beta\gamma}, v_{\beta\gamma}^{-1}\}$ to $\{c, c_0 \}$ with a hierarchy.  We denote the main geodesic of this hierarchy as $[c, v_{\beta\gamma}]_S$.  

Take a vertex $w \in [c, v_{ \beta \gamma}]_S$ where $w$ is not equal to $c$ or $v_{\beta\gamma}$ and let $w^{-1}$ and $w^{+1}$ denote the vertices directly before and after $w$ in $[c, v_{ \beta \gamma}]_S$.  
We want to show that the link connecting $w^{-1}$ to $w^{+1}$ in $S\backslash w$ is at most $5M$.  Assume $d_{S \backslash w} (w^{-1}, w^{+1}) \geq 5M$.  
Consider the path $[w^{+1}, v_{\beta\gamma}]_S \cup [v_{ \beta \gamma}, \beta_0]_S \cup [\beta_0, v_{\alpha \beta}]_S \cup [v_{\alpha \beta}, c]_S \cup [c, w^{-1}]_S$, where geodesics are taken to be on $g_*$ where appropriate.  The Bounded Geodesic Image Theorem, and our assumption that $d_{S \backslash w} (w^{-1}, w^{+1}) \geq 5M$, implies that $w$ must be somewhere on the path.  $w$ cannot be in $[w^{+1}, v_{\beta\gamma}]_S$, $[v_{ \beta \gamma}, \beta_0]_S$, or $[c, w^{-1}]_S$ since that would contradict the fact that they are geodesics or the definition of how we chose $c$ and $v_{\beta\gamma}$.  
Therefore, $w$ is in $[\beta_0, v_{\alpha \beta}]_S$ or $[v_{\alpha \beta}, c]_S$. 
 Without loss of generality assume $w \in [\beta_0, v_{\alpha \beta}]_S$.  We can apply the same logic to the path $[w^{+1}, v_{\beta\gamma}]_S \cup [v_{ \beta \gamma}, \gamma_0]_S \cup [\gamma_0, v_{\alpha \gamma}]_S \cup [v_{\alpha \gamma}, c]_S \cup [c, w^{-1}]_S$.  Now $w$ has to be in $[v_{\alpha \gamma}, c]_S$ so that it doesn't contradict the fact that the three main geodesic of the triangle $\alpha\beta\gamma$ do not share any curves.  However, now all three main geodesics are closer to $w$ than $c$, which contradicts our choice of $c$.  Therefore, the length of $[w^{-1}, w^{+1}]_{S \backslash w}$ is at most $5M$.  

\begin{figure}
\centering
\includegraphics[trim = {0, 4.1in, 0, 2.3in}, clip, scale = .5]{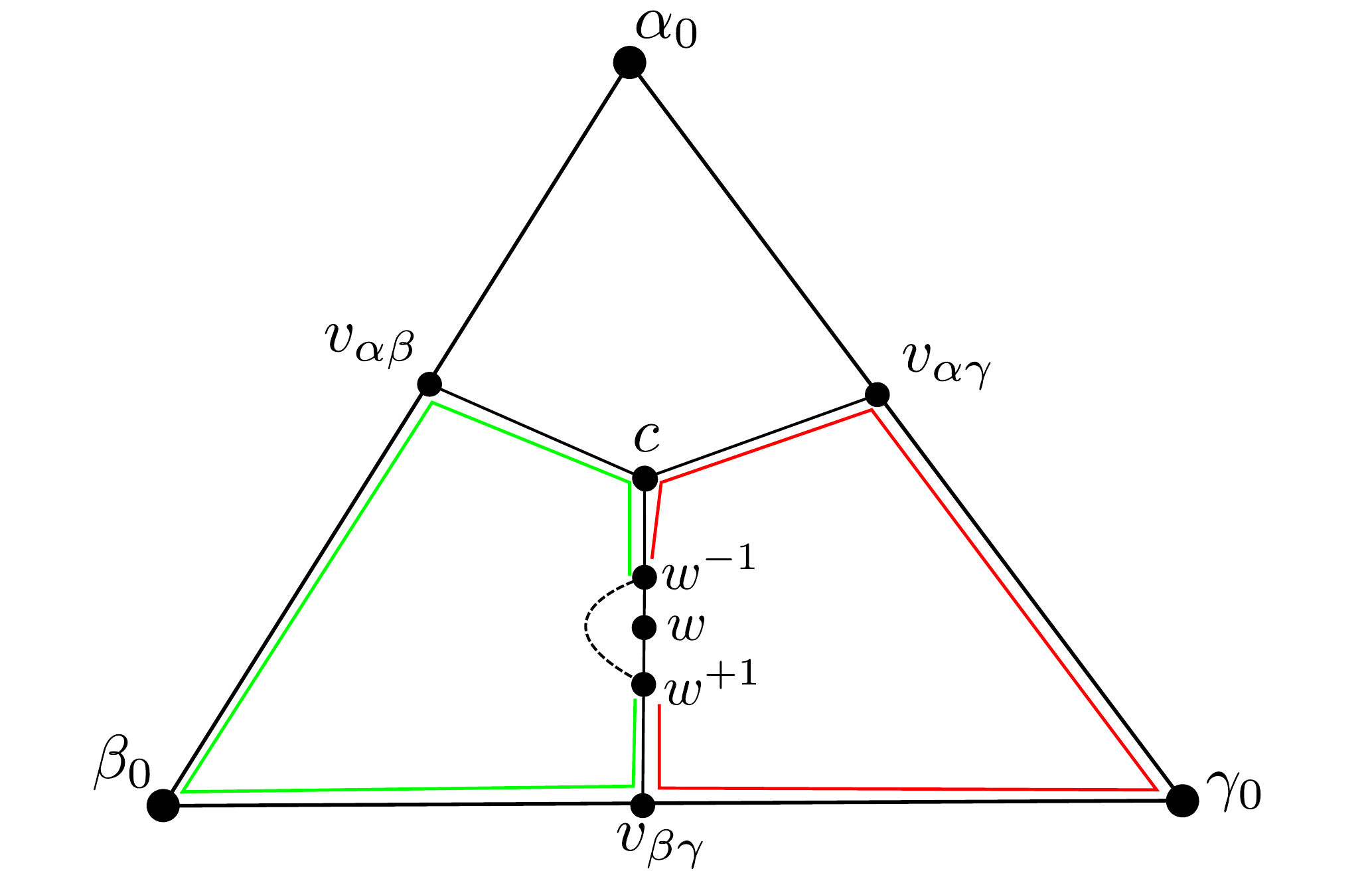}
\caption{Pictured here are the main geodesics in the triangle $\alpha\beta\gamma$ as in case 3.  The colored paths are the ones we consider when showing that the geodesic between $w^{-1}$ and $w^{+1}$ in $\mcC(S \backslash w)$ has length no more than $5M$.  }
\label{path for contradiction}
\end{figure} 


Using a similar argument we can show the geodesic in $\mcC(S \backslash v_{\beta\gamma})$ connecting $v_{\beta\gamma}^{-1}$ to the appropriate vertex in $[c, v_{\beta\gamma}]_S$ is $\leq 5M$.  
Now consider the geodesic in $\mcC(S \backslash c)$ connecting $c_0$ to the second vertex, $x$, of $[c, v_{\beta\gamma}]_S$.  Consider the path $[x, v_{\beta\gamma}]_S \cup [v_{\beta\gamma}, \beta_0]_S \cup [\beta_0, v_{\alpha\beta}]_S \cup [v_{\alpha\beta}, c_0]_S$.  $c$ cannot be in anywhere in this path, otherwise it would contradict how we chose $c$ or $v_*$.  So we can apply the Bounded Geodesic Image Theorem and get that $d_{S \backslash c}(c', x) \leq 4M$.  
Therefore the path from $\{v_{\beta\gamma}, v_{\beta\gamma}^{-1}\}$ to $\{c, c_0\}$ in the pants graph is less than or equal to $16(5M) + 5M + 4M$.  A similar argument can be made for the other two sides of the triangle $\alpha\beta\gamma$, so $\{c, c_0\}$ can be taken to be a center of the triangle.  Since $M \leq 100$ the triangle $\alpha\beta\gamma$ is $8,900$-centered at $\{c, c_0\}$.
\end{proof}

\begin{thm}
\label{main thm 1}
For a surface $S = S_{0,5}, S_{1,2}$, $\mcP(S)$ is $2,691,437$-thin hyperbolic.
\end{thm}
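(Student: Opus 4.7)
The plan is to deduce Theorem \ref{main thm 1} from Theorem \ref{hierarchy k-centered} by applying Bowditch's criterion (Proposition \ref{subset hyperbolic}) to the family $\mcL(x,y)$ of hierarchy paths between pants decompositions $x, y \in \mcP(S)$. The two hypotheses of Proposition \ref{subset hyperbolic} need to be verified with a single constant $h$, after which the claimed value of $\delta$ is extracted from Bowditch's numerical inequality.

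The first, and main, step is to upgrade Theorem \ref{hierarchy k-centered} from a $k$-centered statement to the slim-triangle condition required by hypothesis (1): I want to show that for any three pants decompositions $\alpha, \beta, \gamma$, every point of $\mcL(\alpha,\beta)$ lies in the $4k$-neighborhood of $\mcL(\alpha,\gamma) \cup \mcL(\beta,\gamma)$, where $k = 8{,}900$. The plan is to mirror the argument in the proof of Lemma \ref{centered to thin}: assume some point $t$ on $\mcL(\alpha,\beta)$ is farther than $2k$ from $\mcL(\alpha,\gamma)$, slide $t$ along $\mcL(\alpha,\beta)$ toward the projection of the center until the distance to $\mcL(\alpha,\gamma)$ equals $2k$, and then apply Theorem \ref{hierarchy k-centered} to the smaller hierarchy triangle so formed to derive a contradiction. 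The technical subtlety here is that one needs a sub-segment of a hierarchy path to still be treated as a hierarchy between its endpoints, so that Theorem \ref{hierarchy k-centered} applies to the sub-triangle; this is standard from the Masur--Minsky structure theory of hierarchies, but is the point that requires the most care.

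Hypothesis (2) of Proposition \ref{subset hyperbolic} is straightforward: if $d_{\mcP(S)}(x,y) \leq 1$, then $x$ and $y$ differ by an elementary move, their main geodesic in $\mcC(S)$ has length at most $1$, and the resulting hierarchy path has diameter bounded by a small absolute constant, well below the value of $h$ we will use.

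Finally I set $h = 4 \cdot 8{,}900 = 35{,}600$ and feed this into Proposition \ref{subset hyperbolic}. The inequality
\begin{equation*}
2h(6 + \log_2(m+2)) \leq m
\end{equation*}
becomes $71{,}200(6 + \log_2(m+2)) \leq m$, whose smallest positive solution is $m \approx 1{,}912{,}958$. Substituting into $\delta \geq (3m - 10h)/2$ yields $\delta \leq 2{,}691{,}437$, which is the claimed bound. The main obstacle is the centered-to-slim upgrade for hierarchies in the first step; once $h$ is in hand, the arithmetic of the last step is routine.
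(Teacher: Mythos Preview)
Your proposal is correct and follows essentially the same route as the paper: define $\mcL(x,y)$ as the union of hierarchy paths, convert Theorem \ref{hierarchy k-centered} into a $4k$-thin statement via the argument of Lemma \ref{centered to thin}, check hypothesis (2) trivially for $d(x,y)\leq 1$, and feed $h=4\cdot 8{,}900$ into Proposition \ref{subset hyperbolic}. The only difference is one of emphasis: the paper simply invokes Lemma \ref{centered to thin} to pass from centered to thin, while you (correctly) flag that Lemma \ref{centered to thin} is stated for geodesic triangles and that adapting its proof to hierarchy triangles requires knowing that a sub-segment of a hierarchy path is again a hierarchy path between its endpoints---a point the paper leaves implicit.
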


\begin{proof}
For $x, y \in \mcP(S)$ define $\mcL(x,y)$ to be the collection of hierarchy paths between $x$ and $y$.  These are connected because each hierarchy path is connected and all contain $x$ and $y$.  By Theorem \ref{hierarchy k-centered} and Lemma \ref{centered to thin} we have that for all $x, y, z \in \mcP(S)$
\begin{equation*}
\mcL(x, y) \subset N_{4*8,900}(\mcL(x,z) \cup \mcL(z,y)).
\end{equation*}
If $d(x,y) \leq 1$ then any hierarchy between $x$ and $y$ is just the edge $\{ xy\}$, so $\mcL(x,y) = \{x, y\}$.  Thus, both conditions of Proposition \ref{subset hyperbolic} are satisfied.  Therefore by applying Proposition \ref{subset hyperbolic} we get $\mcP(S)$ is $2,691,437$-thin hyperbolic.
\end{proof}

\section{Relative Hyperbolicity of Pants Graphs Complexity 3}

In this section we turn our attention to relative pants graphs and their hyperbolicity constant.

\begin{thm}
\label{relative hierarchy k-centered}
Take $S$ such that $\xi(S) = 3$.  The relative 3-archy triangles in $\mcP_{rel}(S)$ are $6,191,300$-centered. 
\end{thm}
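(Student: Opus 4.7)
The plan is to mirror the three-case structure of Theorem \ref{hierarchy k-centered} but account for the extra level of geodesics in a relative 3-archy and the presence of domain-separating curves. Given three pants decompositions $\alpha, \beta, \gamma \in \mcP_{rel}(S)$ and the triangle formed by their pairwise relative 3-archies, I would case-split on how the three main geodesics in $\mcC(S)$ interact.

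For Case 1 (all three main geodesics share a curve $v$), I would distinguish whether $v$ is domain separating. If it is, the vertex $p_v \in \mcP_{rel}(S)$ sits on every one of the three relative 3-archies by construction, so $p_v$ is a center at distance $1$ and there is nothing to prove. If $v$ is non-domain separating, then $\xi(S \backslash v) = 2$, and the triple of second-level geodesics in $\mcC(S \backslash v)$, together with three short BGIT detours of length $\leq 2M$ connecting the endpoints $v^{\pm 1}_{*}$ exactly as in Case 1 of Theorem \ref{hierarchy k-centered}, form a hierarchy triangle in $\mcP(S \backslash v)$. Theorem \ref{hierarchy k-centered} applied inside $S \backslash v$ produces a center there; coupling it with $v$ yields a pants decomposition of $S$ at bounded distance from all three relative 3-archies.

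Case 2 reduces to Case 3 by the same argument as before: if two main geodesics share a curve (but not all three), one can truncate and form a new triangle whose three main geodesics share no curve; shared curves along geodesics in $\mcC(S)$ appear in a common order so this reduction is always possible. Case 3 is the main technical step. Following the short-cut argument from \cite{MMII} used in Theorem \ref{hierarchy k-centered}, I would use Theorem \ref{curve hyp} to produce a curve $c \in \mcC(S)$ at distance $\leq 17$ from each main geodesic, choose nearest-point vertices $v_{\alpha\beta}, v_{\alpha\gamma}, v_{\beta\gamma}$, and construct a relative 3-archy from a pants decomposition containing $v_{\beta\gamma}$ to one containing $c$. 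The main geodesic of this auxiliary relative 3-archy has length $\leq 17$ in $\mcC(S)$. For each non-domain-separating vertex $w$ on this main geodesic, the second-level geodesic in $\mcC(S \backslash w)$ is bounded by $5M$ via the same BGIT-plus-contradiction argument as in the complexity-2 case (if it exceeded $5M$, concatenating with the existing main geodesics of the triangle $\alpha\beta\gamma$ and applying the Bounded Geodesic Image Theorem forces $w$ to lie on one of them, contradicting either minimality of $c$ or the no-shared-curves hypothesis). The new ingredient is the third level: for each non-domain separating vertex $z$ on such a second-level geodesic, I would run an analogous BGIT short-cut argument inside $\mcC(S \backslash w)$ to bound the third-level geodesic in $\mcC(S \backslash (w \cup z))$ by $\leq 5M$, this time concatenating second-level geodesics of the original triangle along with the new second-level geodesic just constructed to derive the contradiction. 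Multiplying the three layers and adding the analogous buffer terms at the endpoints $v_{\beta\gamma}$ and $c$ (and their cousins on the other two sides), with $M \leq 100$, gives the stated constant.

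The hard part will be the third-level argument in Case 3. Verifying that every path used to force a contradiction projects nontrivially to the doubly-cut subsurface $S \backslash (w \cup z)$ requires some care, because a second-level geodesic in $\mcC(S \backslash w)$ coming from a different side of the triangle need not a priori have vertices avoiding $z$. Domain separating curves introduce a second layer of bookkeeping: wherever a sub-geodesic is absent because a curve is domain separating, the $p_c$-vertices in $\mcP_{rel}(S)$ must be correctly inserted into the short-cut path and accounted for as length-$2$ bridges in the final length count. Once those technicalities are managed, the three cases combine to give a universal center at distance at most $6{,}191{,}300$ from the three relative 3-archies.
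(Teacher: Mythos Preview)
Your overall architecture matches the paper's: the same three-case split, the same use of Theorem~\ref{curve hyp} to produce a centered curve $c$, the same $5M$ bound on the second-level geodesics along $[c,v_{\beta\gamma}]_S$, and the same reduction of Case~2 to Case~3. Case~1 is also essentially right, though note that the ``$\leq 2M$ detours'' you mention are bounds on the \emph{main} geodesics of those connecting hierarchies in $\mcC(S\backslash v)$; to get a distance bound in $\mcP(S\backslash v)$ you still need to bound their own second-level geodesics (the paper does this with a further BGIT application, getting $\leq 4M$ per link and hence $\leq 8M^2$ total for each detour).

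The genuine gap is your third-level argument in Case~3. You propose to run the short-cut argument ``inside $\mcC(S\backslash w)$'' by ``concatenating second-level geodesics of the original triangle.'' But for a generic $w\in [c,v_{\beta\gamma}]_S$ there are \emph{no} second-level geodesics of the original triangle with domain $\mcC(S\backslash w)$: those geodesics live in $\mcC(S\backslash g_i)$ for $g_i$ on $g_{\alpha\beta}$, $g_{\alpha\gamma}$, $g_{\beta\gamma}$, and in Case~3 the curve $w$ lies on none of these. So there is no triangle structure in $\mcC(S\backslash w)$ to exploit, and your contradiction mechanism has nothing to feed on. The paper avoids this by staying in $\mcC(S)$: it concatenates $[z^{\pm1},w^{\pm1}]_{S\backslash w}$ with pieces of $[c,v_{\beta\gamma}]_S$, the original main geodesics, and $[v_{*},c]_S$ to form seven-piece paths in $\mcC(S)$, and then argues that neither $w$ nor $z$ can appear on any such path (using that $d_{\mcC(S)}(w,z)=1$, so either occurrence forces the same minimality-of-$c$ contradiction). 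Because the paths have seven pieces, the resulting BGIT bound is $7M$, not $5M$; this is why the final constant involves products like $16\cdot 5M\cdot 7M$ rather than $16\cdot(5M)^2$. The endpoint analyses at $v_{\beta\gamma}$ and $c$ are handled by separate, slightly different path concatenations (yielding $5M$, $7M$, $4M$, $6M$ bounds respectively), and summing all of these with $M=100$ is what produces $6{,}191{,}300$.
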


\begin{proof} 
Take three pants decompositions of $S$, say $\alpha = \{\alpha_0, \alpha_1, \alpha_2 \}$, $\beta = \{\beta_0, \beta_1, \beta_2\}$, and $\gamma = \{\gamma_0, \gamma_1, \gamma_2\}$.  Form the triangle $\alpha\beta\gamma$ such that each edge in the triangle is a relative 3-archy in $\mcP_{rel}(S)$.  Let $g_{\alpha\beta}$, $g_{\beta\gamma}$, and $g_{\alpha\gamma}$ be the three main geodesics that make up the triangle (which connects $\alpha_0$, $\beta_0$, and $\gamma_0$).  As before in Theorem \ref{hierarchy k-centered}, there are three cases:

\begin{enumerate}
\item All three main geodesics have a curve in common. 
\item Any two of the main geodesics share a curve, but not the third.
\item None of the main geodesics have common curves.
\end{enumerate} 

For the rest of the proof, note that if $v \in \mcC(S)$ is a non-domain separating curve, then $S \backslash v$ has one connected component with positive complexity, so by abuse of notation, we denote this component as $S \backslash v$.  This means that every curve in $\mcC(S)$ not equal to $v$ intersects $S \backslash v$ so we can use the Bounded Geodesic Image Theorem on any geodesic that doesn't contain $v$.  Take two non-domain separating curve $v,w \in \mcC(S)$ such that $v$ and $w$ are disjoint.   Then, because $\xi(S) = 3$, $S \backslash (v \cup w)$ has one connected component with positive complexity, and again we denote this component as $S \backslash (v \cup w)$.   Furthermore, every curve in $\mcC(S)$ not equal to $v$ or $w$ intersects $S \backslash (v \cup w)$, so we may use the Bounded Geodesic Image Theorem for any geodesic that doesn't contain $v$ or $w$.  

Whenever a domain separating curve, $c$, shows up in a relative 3-archy in $\mcP_{rel}(S)$, the section of the relative 3-archy containing $c$ has length $2$.  Therefore, when referring to a curve along a geodesic within a relative 3-archy we will assume it is non-domain separating since this type of curve adds the most length to the relative 3-archy.  This also just makes the proof cleaner.

\textbf{Case 1:} Let $v$ be a vertex where all three main geodesics intersect.  If $v$ is a domain separating curve then each edge of the triangle $\alpha\beta\gamma$ contains the point $p_v$, so the triangle is $0$-centered.  Now assume $v$ is not a domain separating curve.
Let $v_{\alpha\beta}^{-1}$ and $v_{\alpha\beta}^{+1}$ be the curves that are directly before and after $v$ on $g_{\alpha\beta}$.  Similarly define $v_{\alpha\gamma}^{-1}$, $v_{\alpha\gamma}^{+1}$, $v_{\beta\gamma}^{-1}$, and $v_{\beta\gamma}^{+1}$.  Consider the geodesics associated with $v$ in each relative 3-archy edge; in other words, all geodesics in the relative 3-archy that contribute to defining the path where $v$ is a part of every pants decomposition. 

Let $x_{\alpha\beta}$ be the curve in $[v_{\alpha\beta}^{-1}, v_{\alpha\beta}^{+1}]_{S \backslash v}$ that is adjacent to $v_{\alpha\beta}^{-1}$; similarly define $x_{\alpha\gamma}$.  Now connect $\{v_{\alpha\beta}^{-1}, x_{\alpha\beta} \}$ to $\{v_{\alpha\gamma}^{-1}, x_{\alpha\gamma}\}$ with a hierarchy in $\mcP(S \backslash v)$.
Note, to make our notation cleaner, we will refer to this as the hierarchy between $v_{\alpha\beta}^{-1}$ and $v_{\alpha\gamma}^{-1}$; similarly later on we won't necessarily specify the second curve.      
By the Bounded Geodesic Image Theorem the geodesic connecting $v_{\alpha\beta}^{-1}$ and $v_{\alpha\gamma}^{-1}$ in $\mcC(S\backslash v)$ has length at most $2M$.  Now consider any curve, $w$, in the geodesic $[v_{\alpha\beta}^{-1}, v_{\alpha\gamma}^{-1}]_{S \backslash v}$ contained in the hierarchy connecting $\{v_{\alpha\beta}^{-1}, x_{\alpha\beta} \}$ to $\{v_{\alpha\gamma}^{-1}, x_{\alpha\gamma}\}$.  
Assume $w$ is not a domain separating curve in $S$ and let $w^{-1}$ and $w^{+1}$ be the two curves before and after $w$ on $[v_{\alpha\beta}^{-1}, v_{\alpha\gamma}^{-1}]_{S \backslash v}$.  
Then the geodesic connecting $w^{-1}$ to $w^{+1}$ in $\mcC(S \backslash (v \cup w))$ has length at most $4M$ by using the Bounded Geodesic Image Theorem on $[w^{-1}, v_{\alpha\beta}^{-1}]_{S \backslash v} \cup [v_{\alpha\beta}^{-1}, \alpha_0]_S \cup [\alpha_0, v_{\alpha\gamma}^{-1}]_S \cup [v_{\alpha\gamma}^{-1}, w^{+1}]_{S \backslash v}$; note $w$ cannot be on this path because $w$ is distance $1$ from $v$, so if it was anywhere in the path it would be violating the assumption that we have geodesics.  Therefore the hierarchy between $v_{\alpha\beta}^{-1}$ and $v_{\alpha\gamma}^{-1}$ has length at most $8M^2$.  Similarly the hierarchies between $v_{\alpha\gamma}^{+1}$ and $v_{\beta\gamma}^{+1}$, and $v_{\alpha\beta}^{+1}$ and $v_{\beta\gamma}^{-1}$ have length less than $8M^2$.  

Now, make a hierarchy triangle $v_{\alpha\beta}^{+1}v_{\alpha\gamma}^{-1}v_{\beta\gamma}^{+1}$ in $\mcP(S \backslash v)$, see Figure \ref{links are thin} for how this fits in with above.  By Theorem \ref{hierarchy k-centered}, $v_{\alpha\beta}^{+1}v_{\alpha\gamma}^{-1}v_{\beta\gamma}^{+1}$ in $\mcP(S \backslash v)$ is $8,900$ centered, call the point at the center $z$.  
Then by Theorem \ref{hierarchy k-centered} and Lemma \ref{centered to thin}, the hierarchy triangles $v_{\alpha\beta}^{+1}v_{\alpha\beta}^{-1}v_{\alpha\gamma}^{-1}$, $v_{\beta\gamma}^{-1}v_{\beta\gamma}^{+1}v_{\alpha\gamma}^{+1}$, and $v_{\alpha\gamma}^{-1}v_{\alpha\gamma}^{+1}v_{\beta\gamma}^{+1}$ are $35,600$ thin.  Therefore $z$ is at most $124,500$ away from each $[v_*^{+1}, v_{*}^{-1}]_{S \backslash v}$.  This implies that $\{z, v\}$ is at most $124,500$-centered in the relative 3-archy triangle $\alpha\beta\gamma$.  

\begin{figure}
\centering
\includegraphics[trim = {0, 3.4in, 0, 4.1in}, clip, scale = .5]{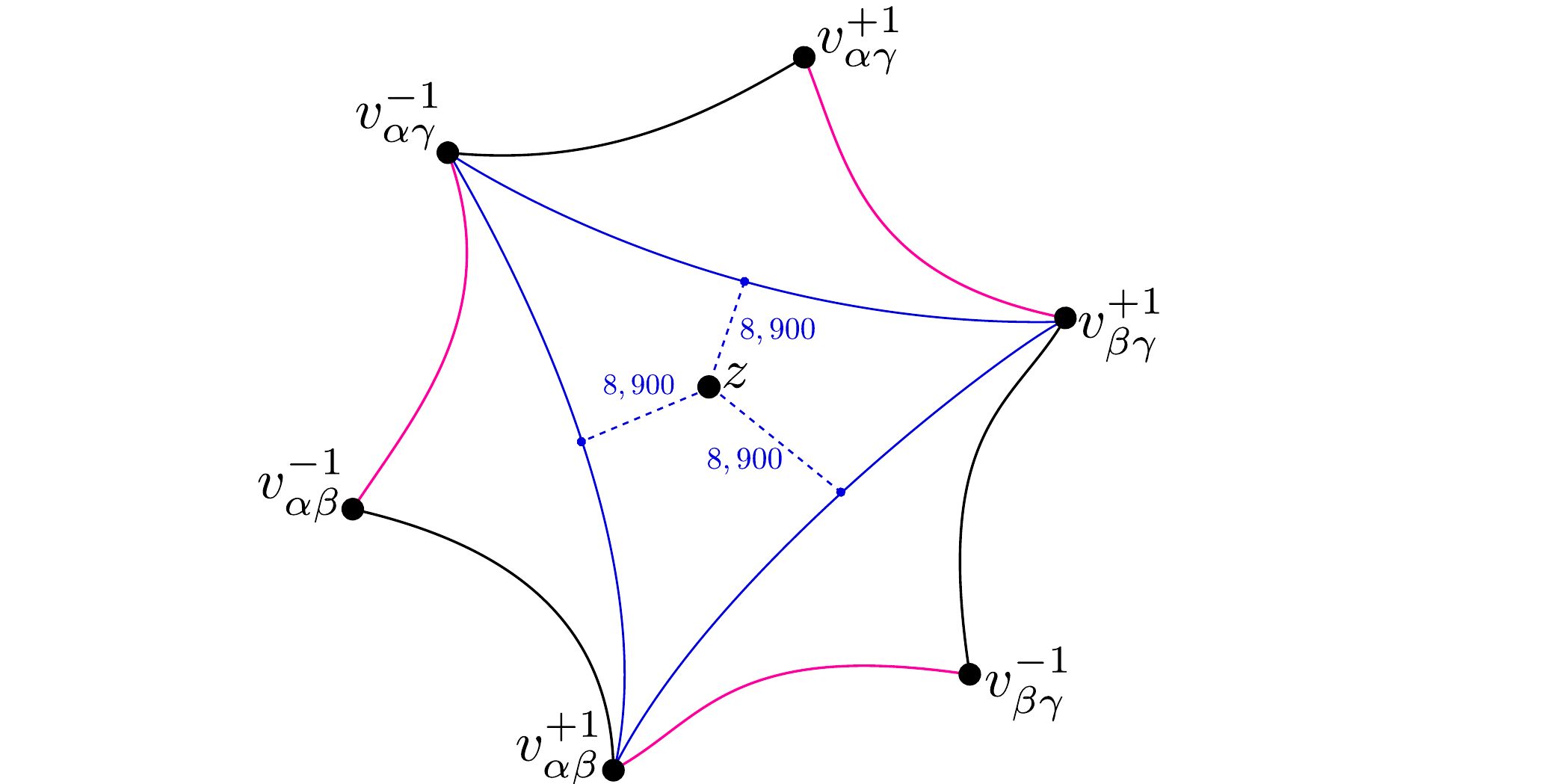}
\caption{All lines above represent the main geodesics in $\mcC(S \backslash v)$ which are a part of a hierarchy.  The black lines come from the geodesics in our original relative 3-archy triangle $\alpha\beta\gamma$.  The pink lines are the ones we added, and the blue lines make the inner triangle that reduces to the case covered by Theorem \ref{hierarchy k-centered}.}
\label{links are thin}
\end{figure}

\textbf{Case 2:} For the same reasons as in Theorem \ref{hierarchy k-centered} case 2, this case can be reduced to case 3.

\textbf{Case 3:} This proceeds with the same strategy as in case 3 of Theorem \ref{hierarchy k-centered}.  By Theorem \ref{curve hyp}, we know the triangle of main geodesics, $g_{\alpha\beta}g_{\beta\gamma}g_{\alpha\gamma}$ in $\mcC(S)$ is $17$-centered.  Let $c$ be the curve that is at the center of this triangle.  Connect $c$ to $g_{\alpha\beta}$, $g_{\beta\gamma}$, and $g_{\alpha\gamma}$ with a geodesic in $\mcC(S)$. 
Define $v_{\alpha \beta}$ to be the vertex in $g_{\alpha \beta}$ which is the least distance to $c$, and similarly define $v_{\alpha \gamma}$ and $v_{\beta \gamma}$.  

Let $c_0$ be the curve directly preceding $c$ in $[v_{\alpha\beta}, c]_S$ and
let $c^{-1}$ be the curve directly preceding $c_0$. Consider a geodesic in $\mcC(S \backslash c_0)$ which connects $c^{-1}$ to $c$, define $c_1$ to be the curve directly preceding $c$ in this geodesic.  We will show $\{c, c_0, c_1\}$ is a center of our relative 3-archy triangle $\alpha\beta\gamma$.  

Let $v_{\beta\gamma}^{-1}$ be the curve before $v_{\beta\gamma}$ in $g_{\beta\gamma}$ and $v_{\beta\gamma}'$ be the curve adjacent to $v_{\beta\gamma}$ in the geodesic contained in the relative 3-archy connecting $\beta$ to $\gamma$ whose domain is $\mcC(S \backslash v_{\beta\gamma}^{-1})$.  Now connect $\{v_{\beta\gamma}, v_{\beta\gamma}^{-1}, v_{\beta\gamma}'\}$ to $\{c, c_0, c_1 \}$ with a relative 3-archy, $H$.  Our goal is to bound the length of $H$.

Using the exact argument as in Theorem \ref{hierarchy k-centered} case 3, for each $w \in [c, v_{\beta\gamma}]_S$ which is non-separating, the geodesic in $H$ whose domain is $\mcC(S\backslash w)$ has length no more than $5M$.  Let $w^{-1}$ and $w^{+1}$ be the curves before and after $w$ in $[c, v_{\beta\gamma}]_S$ and let $[w^{-1}, w^{+1}]_{S \backslash w}$ be the geodesic coming from $H$.  
Take $z \in [w^{-1}, w^{+1}]_{S \backslash w}$ and consider the geodesic in $H$ with domain $\mcC(S \backslash (w \cup z))$.  Define $z^{-1}$ and $z^{+1}$ to be the curves before and after $z$ on $[w^{-1}, w^{+1}]_{S \backslash w}$.  
We will show $[z^{-1}, z^{+1}]_{S \backslash (w \cup z)}$ has length at most $7M$.  Assume towards a contradiction that the length of $[z^{-1}, z^{+1}]_{S \backslash (w \cup z)}$ is greater than $7M$.  Then the path 
$[z^{+1}, w^{+1}]_{S \backslash w} \cup [w^{+1}, v_{\beta\gamma}]_{S} \cup [v_{\beta\gamma}, \gamma_0]_S \cup [\gamma_0, v_{\alpha\gamma}]_S \cup [v_{\alpha\gamma}, c]_S \cup [c, w^{-1}]_S \cup [w^{-1}, z^{-1}]_{S \backslash w}$ 
must contain $z$ or $w$ somewhere, otherwise by the Bounded Geodesic Image Theorem using this path we would get that the length of $[z^{-1}, z^{+1}]_{S \backslash (w \cup z)}$ is at most $7M$.  Since $w$ and $z$ are distance $1$ apart, it doesn't matter which one shows up in the path because we eventually will arise at the same contradiction. Thus, without loss of generality we assume $z$ is in the path (and all other paths considered for this argument).  Then $z$ must be in $[\gamma_0, v_{\alpha\gamma}]_S$ or $[v_{\alpha\gamma}, c]_S$, otherwise there would be a contradiction with the definition of a geodesic or the definition of $c$ or $v_{\beta\gamma}$   Without loss of generality assume $z \in [\gamma_0, v_{\alpha\gamma}]_S$.  Similarly the path 
$[z^{+1}, w^{+1}]_{S \backslash w} \cup [w^{+1}, v_{\beta\gamma}]_{S} \cup [v_{\beta\gamma}, \beta_0]_S \cup [\beta_0, v_{\alpha\beta}]_S \cup [v_{\alpha\beta}, c]_S \cup [c, w^{-1}]_S \cup [w^{-1}, z^{-1}]_{S \backslash w}$
must contain $z$.  Again, the only place $z$ could be, without yielding a contradiction, is in $[v_{\alpha\beta}, c]_S$.  However even here, since $z$ is adjacent to $w$, $w$ is strictly closer than $c$ to the three main geodesics of $\alpha\beta\gamma$ which contradicts our choice of $c$.
Therefore, the length of $[z^{-1}, z^{+1}]_{S \backslash (w \cup z)}$ is at most $7M$. 
Now all that's left to bound is the beginning and end geodesics, i.e. the ones associated to $c$ and $v_{\beta\gamma}$.

Let $y$ be the curve adjacent to $v_{\beta\gamma}$ in $[c, v_{\beta\gamma}]_S$ and let $y'$ be the curve adjacent to $v_{\beta\gamma}$ in the geodesic contained in $H$ whose domain is $\mcC(S \backslash y)$.  Then the very beginning part of $H$ is the hierarchy connecting $\{y, y' \}$ to $\{v_{\beta\gamma}^{-1}, v_{\beta\gamma}' \}$ in $S \backslash v_{\beta\gamma}$.  
We will first bound the length of the geodesic $[y, v_{\beta\gamma}^{-1}]_{S \backslash v_{\beta\gamma}}$.  Assume that the length is more than $5M$.  Then the path $[v_{\beta\gamma}^{-1}, \beta_0]_S \cup [\beta_0, v_{\alpha\beta}]_S \cup [v_{\alpha\beta}, c]_S \cup [c, y]_S$ has to contain $v_{\beta\gamma}$.  By our assumption that the main geodesics on the triangle $\alpha\beta\gamma$ don't intersect, the only part of the path that $v_{\beta\gamma}$ could be on without forming a contraction would be $[v_{\alpha\gamma}, c]_S$.  
The same is true of the path $[v_{\beta\gamma}^{-1}, \beta_0]_S \cup [\beta_0, \alpha_0]_S \cup [\alpha_0, v_{\alpha\gamma}]_S \cup [v_{\alpha\gamma}, c]_S \cup [c, y]_S$, where $v_{\beta\gamma}$ would have to be in $[v_{\alpha\gamma}, c]_S$.  However, then we could take $v_{\beta\gamma}$ to be the center of the main geodesic triangle which would give strictly smaller lengths to each of the sides, contradicting our choice of $c$.  Therefore, $[y, v_{\beta\gamma}^{-1}]_{S \backslash v_{\beta\gamma}}$ has length at most 5M.  

Now take $w \in [y, v_{\beta\gamma}^{-1}]_{S \backslash v_{\beta\gamma}}$ and let $w^{-1}$ and $w^{+1}$ be the curves that come directly before and after $w$ in $[y, v_{\beta\gamma}^{-1}]_{S \backslash v_{\beta\gamma}}$.  We want to bound the length of $[w^{-1}, w^{+1}]_{S \backslash (v_{\beta\gamma} \cup w)}$.  Assume the length is greater than $7M$.  
Then the path $[w^{+1}, v_{\beta\gamma}^{-1}]_{S \backslash v_{\beta\gamma}} \cup [v_{\beta\gamma}^{-1}, \beta_0]_S \cup [\beta_0, v_{\alpha\beta}]_S \cup [v_{\alpha\beta}, c]_S \cup[c, y]_S \cup [y, w^{-1}]_{S \backslash v_{\beta\gamma}}$ must contain $w$ or $v_{\beta\gamma}$.  The only two places this could happen without raising a contradiction is in $[\beta_0, v_{\alpha\beta}]_S$ or $[v_{\alpha\beta}, c]_S$.  Again, whether we assume $w$ or $v_{\beta\gamma}$ is in the path doesn't matter since we will arrive at the same contradiction, hence we can assume without loss of generality $w$ is always on the path.  Therefore, assume $w \in [v_{\alpha\beta}, c]_S$.
Similarly, $w$ is contained in the path $[w^{+1}, v_{\beta\gamma}^{-1}]_{S \backslash v_{\beta\gamma}} \cup [v_{\beta\gamma}^{-1}, v_{\beta\gamma}^{+1}]_{S \backslash v_{\beta\gamma}} \cup [v_{\beta\gamma}^{+1}, \gamma_0] \cup [\gamma_0, v_{\alpha\gamma}]_S \cup [v_{\alpha\gamma}, c]_S \cup[c, y]_S \cup [y, w^{-1}]_{S \backslash v_{\beta\gamma}}$, where $w \in [\gamma_0, v_{\alpha\gamma}]_S$ since anywhere else in the path would lead to a contradiction as explained previously.  Note if $w \in [v_{\alpha\gamma}, c]_S$ then since $w$ is disjoint from $v_{\beta\gamma}$ and that $w \in [v_{\alpha\beta}, c]_S$, we could make a shorter path to each of the three sides on the main geodesic triangle and then $v_{\beta\gamma}$ would be the center of the triangle, contradicting our choice of $c$.  
The path $[w^{+1}, v_{\beta\gamma}^{-1}]_{S \backslash v_{\beta\gamma}} \cup [v_{\beta\gamma}^{-1}, \beta_0]_S \cup [\beta_0, \alpha_0]_S \cup [\alpha_0, v_{\alpha\gamma}]_S \cup [v_{\alpha\gamma}, c]_S \cup[c, y]_S \cup [y, w^{-1}]_{S \backslash v_{\beta\gamma}}$ has to contain $w$ as well.  No matter where $w$ is on this path is creates a contradiction - either with the definition of $c$, with the we have a geodesic, or with the assumption the main geodesics do not share any curves.  Consequently, $[w^{-1}, w^{+1}]_{S \backslash (v_{\beta\gamma} \cup w)}$ must have length at most $7M$.  Note that this argument also works when $w = y$ or $w = v_{\beta\gamma}^{-1}$, which gives a length bound on the geodesic in $H$ whose domain is $\mcC(S \backslash (v_{\beta\gamma} \cup y))$ or $\mcC(S \backslash (v_{\beta\gamma} \cup v_{\beta\gamma}^{-1}))$, respectively.  

Let $x$ be the curve adjacent to $c$ in $[v_{\beta\gamma}, c]_S$ and $x'$ be the last curve adjacent to $c$ in the geodesic from the hierarchy whose domain is $\mcC(S \backslash x)$.  First, the geodesic $[c_0, x]_{S \backslash c}$ has length no more than $4M$ by the Bounded Geodesic Image Theorem applied to $[c_0, v_{\alpha\beta}]_S \cup [v_{\alpha\beta}, \beta_0]_S \cup [\beta_0, v_{\beta\gamma}]_S \cup [v_{\beta\gamma}, x]_S$, which doesn't contain $c$ because if it did we would get a contradiction on the definition of $c$. Now take any curve $w \in [c_0, x]_{S \backslash c}$ and define $w^{-1}$ and $w^{+1}$ as before.  Then the path 
$[w^{+1}, x]_{S \backslash c} \cup [x, v_{\beta \gamma}]_S \cup [v_{\beta\gamma}, \beta_0]_S \cup [\beta_0, v_{\alpha\beta}]_S \cup [v_{\alpha\beta}, c_0]_S \cup [c_0, w^{-1}]_{S \backslash c}$ cannot contain $w$ because $w$ is adjacent to $c$ so if any geodesic making up the path contained $w$ it would either contradict that it is a geodesic or that $c$ is minimal distance from the main geodesics of the triangle $\alpha\beta\gamma$.  Hence, applying the Bounded Geodesic Image Theorem to the path we get that $[w^{-1}, w^{+1}]_{S \backslash (c \cup w)}$ has length no more than $6M$.  This leaves bounding the lengths of the geodesics connecting $c_1$ to the second vertex of $[c_0, x]_{S \backslash c}$ and $x'$ to the penultimate vertex of $[c_0, x]_{S \backslash c}$.  By a similar argument using the Bounded Geodesic Image Theorem each of these geodesics have length at most $6M$.  Therefore, putting all the length bounds together we get that the relative 3-archy connecting $\{v_{\beta\gamma}, v_{\beta\gamma}^{-1}, v_{\beta\gamma}'\}$ to $\{c, c_0, c_1 \}$ has length at most $16*5M*7M + (4M-1)*6M+12M + (5M+1)*7M = 6,191,300$

Similarly $\{c, c_0, c_1\}$ is length at most $6,191,300$ from the other two sides of the triangle $\alpha\beta\gamma$.  Therefore, the relative 3-archy triangle $\alpha\beta\gamma$ is $6,191,300$-centered.  
\end{proof} 

\begin{thm}
\label{main thm 2}
For a surface $S$ such that $\xi(S) =3$, $\mcP_{rel}(S)$ is $1,607,425,314$-thin hyperbolic.
\end{thm}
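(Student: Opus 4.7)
The plan is to replicate the template of Theorem \ref{main thm 1}, substituting relative 3-archies for hierarchies and invoking Theorem \ref{relative hierarchy k-centered} in place of Theorem \ref{hierarchy k-centered}. Beyond the already-established $k$-centered bound, the only ingredient needed is Bowditch's subgraph criterion, Proposition \ref{subset hyperbolic}.

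Concretely, for each pair $x, y \in \mcP_{rel}(S)$ I would define $\mcL(x, y)$ to be the union (as a subgraph of $\mcP_{rel}(S)$) of all relative 3-archy paths from $x$ to $y$. Each such path is itself connected and contains both endpoints, so $\mcL(x,y)$ is a connected subgraph containing $x$ and $y$. When one of $x$ or $y$ happens to be an apex vertex $p_c$, I would extend the definition by first stepping across the unique edge from $p_c$ into some pants decomposition in $X_c$ and then applying the relative 3-archy construction from there; this keeps $\mcL(x,y)$ connected and affects distances only by an additive constant of at most $1$, which is absorbed into the final bound. When $d(x,y) \leq 1$, the only relative 3-archy between $x$ and $y$ is the single edge itself, so $\mcL(x,y)$ has diameter at most $1$.

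Theorem \ref{relative hierarchy k-centered} states that every relative 3-archy triangle in $\mcP_{rel}(S)$ is $6,191,300$-centered, and by Lemma \ref{centered to thin} such a triangle is therefore $4 \cdot 6,191,300 = 24,765,200$-thin. Since thinness says precisely that each side lies in the $24,765,200$-neighborhood of the union of the other two sides, and any relative 3-archy path between two vertices of a triangle $\alpha\beta\gamma$ lies on a single side of that triangle, it follows that for all $x, y, z \in \mcP_{rel}(S)$,
\[
\mcL(x,y) \subset N_{24,765,200}\bigl(\mcL(x,z) \cup \mcL(z,y)\bigr).
\]

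Both hypotheses of Proposition \ref{subset hyperbolic} then hold with $h = 24,765,200$, so I would finish by choosing any positive real $m$ satisfying $2h\bigl(6 + \log_2(m+2)\bigr) \leq m$ and taking $\delta = (3m - 10h)/2$; plugging in and optimizing yields the claimed bound of $1,607,425,314$. The step that requires the most care is not conceptual but numerical: the value of $m$ must be chosen large enough to satisfy the logarithmic inequality while keeping $\delta$ as small as possible. The heart of the theorem is really Theorem \ref{relative hierarchy k-centered}, and the present argument is a clean bookkeeping step that packages the $k$-centered bound on relative 3-archy triangles into the $\delta$-thin hyperbolicity of the whole graph.
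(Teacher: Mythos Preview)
Your proposal is correct and follows essentially the same route as the paper: define $\mcL(x,y)$ as the union of relative 3-archy paths, feed the bound $h = 4 \cdot 6{,}191{,}300$ from Theorem~\ref{relative hierarchy k-centered} and Lemma~\ref{centered to thin} into Proposition~\ref{subset hyperbolic}, and read off $\delta$. The paper's own proof is exactly this, minus your parenthetical remark about apex vertices $p_c$; the paper simply writes ``for $x,y \in \mcP_{rel}(S)$'' and does not comment on the case where an endpoint is a cone point, so if anything you have been slightly more careful than the original on that point.
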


\begin{proof}
For $x, y \in \mcP_{rel}(S)$ define $\mcL(x,y)$ to be the collection of relative 3-archy paths between $x$ and $y$.  These are connected because each relative 3-archy path is connected and all the relative 3-archies in $\mcL(x, y)$ contain $x$ and $y$.  By Theorem \ref{relative hierarchy k-centered} and Lemma \ref{centered to thin} we have that for all $x, y, z \in \mcP_{rel}(S)$
\begin{equation*}
\mcL(x, y) \subset N_{4*6,191,300}(\mcL(x,z) \cup \mcL(z,y)).
\end{equation*}
If $d(x,y) \leq 1$ then any relative 3-archy between $x$ and $y$ is just the edge $\{ xy\}$, so $\mcL(x,y) = \{x, y\}$.  We now have both conditions of Proposition \ref{subset hyperbolic} satisfied.  Therefore by applying Proposition \ref{subset hyperbolic} we get that $\mcP_{rel}(S)$ is $1,607,425,314$-thin hyperbolic.
\end{proof}

\bibliographystyle{plain}
\bibliography{mybib}

\noindent {\it Email:}\\
aweber@math.brown.edu

\end{document}